\tikzstyle{edge}=[very thick]
\definecolor{bostonuniversityred}{rgb}{0.8, 0.0, 0.0}
\definecolor{arsenic}{rgb}{0.23, 0.27, 0.29}
\tikzstyle{diredge}=[postaction={decorate,decoration={markings,
\newcommand{\defPt}[3]{
	\def \pt {(#1, #2)}
	\coordinate [at = \pt, name = #3];
}
\tikzset{
   conn/.pic={
     \defPt{0.2}{-0.5}{q0}
     \defPt{-1}{-1.5}{q5}
    \defPt{1}{1.2}{q1}
    \defPt{1}{2.7}{q6}
    \defPt{1.25}{-1.2}{q2}
    \defPt{2.5}{0.6}{q3}
    \defPt{2.5}{-0.6}{q4}
  
        \draw[line width=1 pt] (q0) -- (q1) -- (q3) -- (q4);
        \draw[line width=1 pt] (q2) -- (q3);
        \draw[line width=1 pt] (q0) -- (q5);
        \draw[line width=1 pt] (q1) -- (q6);
  }
}
\newcommand{\fitellipsis}[3] % first and second node names without parentheses
{\draw []let \p1=(#1), \p2=(#2), \n1={atan2(\y2-\y1,\x2-\x1)}, \n2={veclen(\y2-\y1,\x2-\x1)}
    in ($ (\p1)!0.5!(\p2) $) ellipse [ x radius=\n2/2+0.3cm+#3cm, y radius=#3cm, rotate=\n1];
}
\newcommand{\fitellipsiss}[3] % first and second node names without parentheses
{\draw [fill=white]let \p1=(#1), \p2=(#2), \n1={atan2(\y2-\y1,\x2-\x1)}, \n2={veclen(\y2-\y1,\x2-\x1)}
    in ($ (\p1)!0.5!(\p2) $) ellipse [ x radius=\n2/2+#3cm, y radius=#3cm, rotate=\n1];
}
\newcommand{\fitellipsisss}[3] % first and second node names without parentheses
{\draw []let \p1=(#1), \p2=(#2), \n1={atan2(\y2-\y1,\x2-\x1)}, \n2={veclen(\y2-\y1,\x2-\x1)}
    in ($ (\p1)!0.5!(\p2) $) ellipse [ x radius=\n2/2+#3cm, y radius=#3cm, rotate=\n1];
}
\theoremstyle{plain}
\newtheorem*{thm*}{Theorem}
\newtheorem{thm}{Theorem}
\Crefname{thm}{Theorem}{Theorems}
\newtheorem*{lem*}{Lemma}
\newtheorem{lem}[thm]{Lemma}
\Crefname{lem}{Lemma}{Lemmas}
\newtheorem*{claim*}{Claim}
\Crefname{claim}{Claim}{Claims}
\Crefname{claim}{Claim}{Claims}
\newtheorem{prop}[thm]{Proposition}
\Crefname{prop}{Proposition}{Propositions}
\newtheorem{cor}[thm]{Corollary}
\Crefname{cor}{Corollary}{Corollaries}
\newtheorem{conj}[thm]{Conjecture}
\Crefname{conj}{Conjecture}{Conjectures}
\Crefname{qn}{Question}{Questions}
\Crefname{obs}{Observation}{Observations}
\Crefname{ex}{Example}{Examples}
\theoremstyle{definition}
\Crefname{prob}{Problem}{Problems}
\Crefname{defn}{Definition}{Definitions}
\newtheorem*{defn*}{Definition}
\theoremstyle{remark}
\newcommand{\ceil}[1]{
    \left\lceil #1 \right\rceil
}
\newcommand{\eps}{\varepsilon}
\def\expandafter\normalsize\expandafter{%
    \normalsize
    \setlength\abovedisplayskip{8pt}
    \setlength\belowdisplayskip{8pt}
    \setlength\abovedisplayshortskip{4pt}
    \setlength\belowdisplayshortskip{4pt}
}
 \setlist[itemize]{leftmargin=*}
\DeclareFontFamily{OT1}{pzc}{}
\DeclareFontShape{OT1}{pzc}{m}{it}{<-> s * [1.10] pzcmi7t}{}
\DeclareMathAlphabet{\mathpzc}{OT1}{pzc}{m}{it}
\title{\vspace{-0.8cm}  Equivalence between Erd\H{o}s-Hajnal and polynomial R\"odl and Nikiforov conjectures}
\author{ 
Matija Buci\'c\thanks{School of Mathematics, Institute for Advanced Study and Department of Mathematics, Princeton University, Princeton, USA. Email: \href{mailto:matija.bucic@ias.edu} {\nolinkurl{matija.bucic@ias.edu}}.} \and Jacob Fox\thanks{Department of Mathematics, Stanford University, Stanford, CA. Email: \textbf{jacobfox@stanford.edu}. Research
supported by NSF Awards DMS-2154129.} 
\and Huy Tuan Pham\thanks{Department of Mathematics, Stanford University, Stanford, CA. Email: \textbf{huypham@stanford.edu}. Research supported by a Clay Research Fellowship and a Stanford Science Fellowship.}}
 \date{}
\begin{document}

\maketitle

\begin{abstract}
It is well-known that polynomial versions of theorems of R\"odl and Nikiforov, as conjectured by Fox and Sudakov and Nguyen, Scott and Seymour imply the classical Erd\H{o}s-Hajnal conjecture. In this note, we prove that these three conjectures are in fact equivalent, extending several previous particular results in this direction by Fox, Nguyen, Scott and Seymour; Nguyen, Scott and Seymour and Gishboliner and Shapira. We deduce that the family of string graphs satisfies the polynomial R\"odl conjecture. We also derive analogous results for hypergraphs, tournaments, ordered graphs, and colored graphs. 
\end{abstract}

\section{Introduction} 

A subset of vertices of a graph is {\it homogeneous} if it is either a clique or an independent set. For a graph $G$, denote by $\hom(G)$ the size of the largest homogeneous subset of vertices of $G$. A graph is \textit{$H$-free} if it does not contain $H$ as an induced subgraph. 

Among the most prominent open problems in extremal and structural combinatorics is the following conjecture of  Erd\H{o}s and Hajnal \cite{erdos-hajnal-1,erdos-hajnal-2} from 1977.
\begin{conj}\label{conj:E-H}
For every graph $H$ there is $c_H>0$ such that any $n$-vertex $H$-free graph $G$ has $\hom(G) \ge n^{c_H}$.
\end{conj}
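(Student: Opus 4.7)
The plan is to attempt the conjecture by inductive reduction on $|V(H)|$, combined with the equivalence framework developed in this paper. The base cases for very small $H$ (paths of length $\le 3$, bulls, etc.) are already known in the literature and can be used without modification. For the inductive step, given a graph $H$ on $k$ vertices, I would first attempt the substitution reduction of Alon, Pach and Solymosi: if $H$ decomposes as a substitution $H_1[H_2]$ of two smaller graphs both satisfying the Erd\H{o}s--Hajnal property with exponents $c_{H_1}, c_{H_2}$, then $H$ does so as well with exponent roughly $c_{H_1} c_{H_2} / (c_{H_1} + c_{H_2})$. This handles all decomposable $H$ and reduces the problem to \emph{prime} graphs.

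For prime $H$, the plan is to use the equivalence with the polynomial R\"odl conjecture announced in the abstract. Concretely, it suffices to show that every $n$-vertex $H$-free graph $G$ contains a vertex subset $S$ of size $n^{\gamma_H}$ such that $G[S]$ has edge density either at most $\eps$ or at least $1 - \eps$, for some absolute $\eps < 1/2$. Given such $S$, I would iterate: apply the polynomial R\"odl statement again to the dense (or sparse) subgraph, and after $O(\log \log n / \log(1/\gamma_H))$ iterations obtain a subset of polynomial size that is essentially complete or empty. A Ramsey-type cleanup on this nearly-homogeneous set then yields a homogeneous subset of size $n^{c_H}$ for some $c_H > 0$ depending only on $\gamma_H$ and $\eps$.

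The main obstacle, by far, is producing the nearly-homogeneous polynomial-sized set $S$, i.e., proving the polynomial R\"odl property. R\"odl's original theorem gives only a sub-polynomial $|S| \ge \exp(c\sqrt{\log n})$, which is exactly what causes Erd\H{o}s and Hajnal's own proof to yield only the sub-polynomial bound $\hom(G) \ge \exp(c_H \sqrt{\log n})$. Bridging this gap seems to require a genuinely new structural tool: candidate approaches include bounded VC-dimension arguments (which give polynomial bounds but only for semi-algebraic or geometric $H$-free families), iterated dependent random choice, and regularity-based partitioning with explicit density-increment steps. A promising intermediate milestone would be to establish the polynomial R\"odl property within a restricted class (for instance, string graphs, as the abstract notes follows as a consequence of the equivalence) and then attempt to transfer the methods to the general setting.

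Given the conjecture's 45-year history, I expect that neither the substitution reduction nor the iteration scheme will be the serious bottleneck; the fundamental barrier is the polynomial R\"odl step, and any plan that does not address it directly — as this paper's equivalence does by pinning down exactly what must be proved — is unlikely to resolve the full conjecture.
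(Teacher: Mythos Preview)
The statement you are attempting is \Cref{conj:E-H}, the Erd\H{o}s--Hajnal conjecture. The paper does \emph{not} prove this statement; it is stated explicitly as a conjecture and remains open. What the paper proves (\Cref{mainthm1}, \Cref{mainthmquant}, \Cref{mainfamily}) is the \emph{equivalence} of \Cref{conj:E-H} with the polynomial R\"odl and polynomial Nikiforov conjectures, not the truth of any of them.

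Your plan contains a genuine circularity. You propose to prove the Erd\H{o}s--Hajnal property for a prime $H$ by first establishing the polynomial R\"odl property for $H$-free graphs and then iterating (or, as in the paper's introduction, applying Tur\'an's theorem with $\eps = n^{-c_H}$) to extract a polynomial-sized homogeneous set. But the direction ``polynomial R\"odl $\Rightarrow$ Erd\H{o}s--Hajnal'' was already known and easy; the paper's contribution is precisely the \emph{reverse} implication. So invoking the paper's equivalence does not help you prove \Cref{conj:E-H}: by that very equivalence, the polynomial R\"odl step you identify as the bottleneck is exactly as hard as the conjecture itself. You correctly diagnose this in your final paragraph, but that diagnosis means the plan is not a proof strategy for the conjecture --- it is a restatement of the problem in an equivalent form.

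In short: there is no ``paper's own proof'' of this statement to compare against, and your proposal does not constitute one either, as you yourself concede.
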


The Erd\H{o}s-Hajnal conjecture implies that $H$-free graphs contain much larger homogeneous sets than typical graphs. Indeed, Erd\H{o}s \cite{Erdos1947} in 1947 proved that a random graph $G$ on $n$ vertices asymptotically almost surely satisfies $\hom(G) \leq 2\log_2 n$. In the direction of their conjecture, Erd\H{o}s and Hajnal \cite{erdos-hajnal-2} proved that every $H$-free graph $G$ on $n$ vertices satisfies $\hom(G) \geq e^{c_H \sqrt{\log n}}$. This bound was recently improved by Bucic, Nguyen, Scott and Seymour \cite{tung-paper} to $\hom(G) \geq e^{c_H \sqrt{\log n \log \log n}}$.

A graph $H$ satisfying \Cref{conj:E-H} is said to have the \emph{Erd\H{o}s-Hajnal property}. Graphs with up to five vertices have the Erd\H{o}s-Hajnal property (combining results from \cite{erdos-hajnal-2,APS01,gyarfas-survey,bull,5-hole,P_5-full}). Alon, Pach, and Solymosi \cite{APS01} proved that the Erd\H{o}s-Hajnal property holds for any graph that can be obtained by substitution from smaller graphs which have the Erd\H{o}s-Hajnal property. Nguyen, Scott, and Seymour \cite{buildable} recently proved that an infinite family of prime graphs (not obtainable by substitution from smaller graphs) each satisfy the Erd\H{o}s-Hajnal property. An approximate version of the Erd\H{o}s-Hajnal conjecture was recently established for paths \cite{long-paths}. For more on the Erd\H{o}s-Hajnal conjecture, see the surveys \cite{gyarfas-survey,fox-pach-survey,maria-survey}. Various variants of the Erd\H{o}s-Hajnal property and their relationships to each other have been studied before, connecting Ramsey-type properties to Tur\'an-type properties and Szemer\'edi regularity-type properties (see \cite{fps}). This paper answers some questions about the relationships between some of these properties.

Fox and Sudakov \cite{fox-sudakov} in 2008 conjectured the following quantitative strengthening of a classical result of R\"odl \cite{rodl} from 1986. A set $S$ of vertices in a graph $G$ is 
\emph{$\eps$-homogeneous} if the edge density of the induced subgraph $G[S]$ is at most $\eps$ or at least $1-\eps$. 

\begin{conj}\label{conj:rodl}
For any graph $H$ there exists a $C_H>0$ such that for any $0<\eps \le 1/2$ any $n$-vertex $H$-free graph contains an $\eps$-homogeneous set with at least $\eps^{C_H}n$ vertices.
\end{conj}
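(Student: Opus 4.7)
Since \Cref{conj:rodl} remains an open problem, the natural plan is to prove it \emph{conditionally on} \Cref{conj:E-H}, which, together with the standard converse implication, would give the equivalence promised by the paper's title. The easy direction (\Cref{conj:rodl} $\Rightarrow$ \Cref{conj:E-H}) is a routine reduction: an $\eps$-sparse set $S$ of size $m$ has at most $\eps m^2/2$ edges, so greedy deletion of one endpoint from each edge leaves an independent set of size $\Omega(m)$ whenever $\eps m \lesssim 1$; choosing $\eps = n^{-1/(C_H+1)}$ produces a homogeneous subset of size $n^{\Omega(1)}$ in any $H$-free graph, yielding $c_H \geq 1/(C_H+1)$.

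For the main direction, fix an $H$-free graph $G$ on $n$ vertices and $\eps \in (0, 1/2]$. My plan is to build up an $\eps$-homogeneous set by pasting together many small homogeneous sets produced by Erd\H{o}s--Hajnal. Concretely, I would iteratively construct disjoint independent sets $A_1,\ldots,A_k$ with $k \approx 1/\eps$ such that, for each $i\neq j$, every vertex of $A_j$ has at most $\eps |A_i|$ neighbors in $A_i$; then $A_1 \cup \cdots \cup A_k$ is $\eps$-sparse. At step $i+1$, after $A_1,\ldots,A_i$ are fixed, I would pass to the set of vertices whose bipartite density to each previously built $A_j$ is at most $\eps/2$, which contains a substantial fraction of $V(G)\setminus \bigcup_{j\le i} A_j$ by a simple averaging argument, and then invoke \Cref{conj:E-H} (on the induced subgraph, which is still $H$-free) to produce the next independent set $A_{i+1}$ inside this set.

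The main obstacle is achieving the \emph{polynomial} dependence on $\eps$. A naive execution of the scheme loses a $\mathrm{poly}(\eps)$ factor in the ambient size per iteration, so $k \approx 1/\eps$ rounds yield only an $\exp(-\mathrm{poly}(1/\eps))$ overall factor rather than $\eps^{C_H}n$. To save this loss I would introduce branching: at each stage apply Erd\H{o}s--Hajnal to many disjoint candidate sub-hosts in parallel, and then select the candidates whose pieces are most mutually compatible with the current partial construction, absorbing the branching loss into a single polynomial hit at the end. A cleaner route, hinted at by the paper's title mentioning Nikiforov, would be first to prove a polynomial \emph{Nikiforov} statement (a disjoint pair $(A,B)$ of size $\eps^C n$ with bipartite density at most $\eps$), which is more amenable to density-increment arguments because bipartite density is easier to control than internal density, and then to deduce polynomial R\"odl from polynomial Nikiforov by a doubling argument within one side (or by iterating Nikiforov with geometrically shrinking parameters). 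This bipartite reduction, and the density-increment at its core, is where I expect the bulk of the novel ideas to live.
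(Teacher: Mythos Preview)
Your proposal has a genuine gap and also takes a completely different route from the paper's argument.

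\textbf{The gap.} You correctly diagnose that an iterative ``grow a sparse set one piece at a time'' scheme loses a $\mathrm{poly}(\eps)$ factor per round and hence ends up with an $\exp(-\mathrm{poly}(1/\eps))$-sized set after $\Theta(1/\eps)$ rounds. Your fix --- ``branch over many disjoint candidate sub-hosts and select compatible pieces'' --- is not a proof; it is a hope. Nothing in what you wrote explains why the compatibility constraints (low bipartite density to \emph{all} previously chosen $A_j$'s) can be satisfied by a $\mathrm{poly}(\eps)$ fraction of the branches simultaneously, and in fact the naive union-bound over $\Theta(1/\eps)$ constraints reintroduces exactly the exponential loss you are trying to avoid. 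You also misidentify the ``polynomial Nikiforov'' statement: in this paper it is \Cref{conj:nikiforov}, which concerns graphs with few induced copies of $H$, not a bipartite pair $(A,B)$ with small density. So your ``cleaner route'' via Nikiforov does not connect to anything in the paper.

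\textbf{What the paper actually does.} The paper abandons iterative construction entirely and replaces it with a global \emph{counting} argument. Fix $k \approx \eps^{-1}\mathrm{polylog}(1/\eps)$ and set $t = k^{1/c_H}$. On one hand, a sampling argument (\Cref{lem:counts-lwr-bnd}) shows that an $H$-free graph on $n$ vertices contains at least $\Omega((n/t)^k)$ homogeneous $k$-sets: a random $2t$-subset is $H$-free, so by the Erd\H{o}s--Hajnal property it contains a homogeneous $k$-set, and double counting gives the bound. On the other hand, if $G$ has \emph{no} induced subgraph on $\delta n$ vertices of maximum degree at most $\eps|S|$, then the Kleitman--Winston container lemma (\Cref{lem:counts-upr-bnd}) bounds the number of independent $k$-sets by $\binom{n}{\ell}\binom{\delta n}{k-\ell}$ with $\ell \approx \eps^{-1}\log(1/\delta)$. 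Choosing $\delta \approx \eps^{1/c_H - 1}$ makes these two counts contradict each other. The point is that the polynomial dependence falls out of a single comparison of two binomial-type quantities, with no iteration and hence no compounding loss. This is the key idea you are missing.
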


A graph $H$ satisfying \Cref{conj:rodl} is said to have the \emph{polynomial R\"odl property}. It is not difficult to show that a graph satisfying the polynomial R\"odl property also has the Erd\H{o}s-Hajnal property through an application of Tur\'an's theorem. Indeed, suppose \Cref{conj:rodl} holds for $H$ with constant $C_H$ and let $c_H=1/(C_H+1)$. Suppose $G$ is an $H$-free graph on $n$ vertices, and let $\eps=n^{-c_H}$. By \Cref{conj:rodl}, there is an $\eps$-homogeneous set $S$ with  
$|S| \geq \eps^{C_H}n=n^{c_H}$. Without loss of generality, the edge density of $G[S]$ is at least $1-\eps$. Applying Tur\'an's theorem, there is a clique in $G[S]$ (and hence in $G$) of order $n^{c_H}/2$. Thus if a graph $H$ has the polynomial R\"odl property with associated constant $C_H$, then it also has the Erd\H{o}s-Hajnal property with constant $c_H=1/(C_H+1)$ (apart from a factor two in the homogeneous set size). 

Nikiforov \cite{Nikiforov} proved a strengthening of R\"odl's theorem that allows for the same conclusion under the weaker hypothesis of not containing too many induced copies of $H$. Fox, Nguyen, Scott and Seymour \cite{P_4-rodl} conjectured the following quantitative strengthening of Nikiforov's theorem and of the polynomial R\"odl conjecture.

\begin{conj}\label{conj:nikiforov}
For any graph $H$ there are constants $C_H,D_H$ such that for any $0<\eps \le 1/2$, any $n$-vertex graph with at most $\eps^{D_H}n^{|H|}$ induced copies of $H$ contains an $\eps$-homogeneous set with at least $\eps^{C_H}n$ vertices.
\end{conj}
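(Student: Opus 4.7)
The plan is to attack \Cref{conj:nikiforov} by a two-step reduction. First, I would sparsify the host graph to replace the ``few induced copies of $H$'' hypothesis by the cleaner ``$H$-free'' hypothesis; second, I would use a polynomial R\"odl-type bound on the resulting $H$-free induced subgraph to locate the desired $\eps$-homogeneous set. Since a direct unconditional attack on \Cref{conj:nikiforov} is out of reach (it is at least as hard as \Cref{conj:E-H}), the R\"odl-type input needed for the second step will itself be extracted from the Erd\H{o}s--Hajnal property for $H$.

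For the sparsification step, given an $n$-vertex graph $G$ with at most $\eps^{D_H} n^{|H|}$ induced copies of $H$, I would sample a uniformly random vertex subset $S$ of a carefully chosen size. The expected number of induced $H$-copies surviving in $S$ scales as $(|S|/n)^{|H|}$ times the total, which by the hypothesis is at most $\eps^{D_H}|S|^{|H|}$. Choosing $|S|$ (in terms of $\eps$) so that this expected count is an arbitrarily small fraction of $|S|$, and then deleting one vertex per surviving copy, produces an $H$-free induced subgraph $G'$ with $|V(G')| = \Omega(|S|)$. By taking $D_H$ sufficiently large in terms of $|H|$ and the eventual R\"odl exponent for $H$, one can arrange that $|V(G')| \ge \eps^{C_H'}\,n$ for a suitable $C_H'$.

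For the second step, I would derive \Cref{conj:rodl} for $H$ from \Cref{conj:E-H} for $H$ by an iterative density-increment scheme applied to the $H$-free graph $G'$. At each stage, either a dense (density $\ge 1-\eps$) or sparse (density $\le \eps$) induced subgraph of polynomial size is already visible, in which case we are done, or the Erd\H{o}s--Hajnal property supplies a clique or independent set which can be used as a ``test set'' to partition the remainder into classes according to neighbourhood profile; one such class must have density bounded away from $\eps$ and $1-\eps$, and we recurse inside it with a refined scale. After roughly $\log(1/\eps)$ levels, the accumulated refinement yields an $\eps$-homogeneous set of polynomial size.

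The main obstacle is ensuring that this iteration closes with a true polynomial dependence on $\eps$, rather than the naive quasipolynomial bound of the shape $n^{c_H^{\log(1/\eps)}}$ one obtains by blindly iterating \Cref{conj:E-H}. Each recursive call threatens to multiply the exponent by $c_H<1$, so the argument requires a stability or amplification mechanism that either consolidates many recursive subproblems into a single application of Erd\H{o}s--Hajnal at a favourable scale, or exploits the global structure of $H$-freeness to prevent compounding losses. Designing this amplification is the technical heart of the problem and is exactly where the present paper must make its novel contribution.
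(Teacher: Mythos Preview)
Your sparsification step is sound and is indeed part of the paper's argument (it appears as \Cref{lem:counts-lwr-bnd}). The gap is in your second step: you correctly diagnose that a naive density-increment recursion, where each level costs a factor $c_H$ in the exponent, yields only a quasipolynomial bound $\eps^{O(\log(1/\eps))}$ on the size of the $\eps$-homogeneous set, and you defer the fix to an unspecified ``amplification mechanism''. No such mechanism along density-increment lines is provided, and it is not clear one exists; the obstacle you name is real, and your proposal does not overcome it.

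The paper's route is entirely different and avoids recursion altogether. Rather than passing to a single $H$-free subgraph and then hunting inside it, the paper counts homogeneous $k$-sets in $G$ globally. The sampling argument you describe is used not to produce one $H$-free piece but to certify a \emph{lower bound} on the number of cliques or independent sets of order $k$ in $G$ (each random $2t$-sample typically contains an $H$-free $t$-set, hence a homogeneous $k$-set, and a double count turns this into roughly $(n/t)^k$ such sets). On the other side, if $G$ had no $\eps$-homogeneous set of size $\delta n$, then in particular every induced subgraph on at least $\delta n$ vertices has a vertex of degree at least $\eps|S|-1$; the Kleitman--Winston container lemma (\Cref{lem:counts-upr-bnd}) then gives an \emph{upper bound} of the shape $\binom{n}{\ell}\binom{\delta n}{k-\ell}$ on the number of independent $k$-sets. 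For the right choice of $k \approx \eps^{-1}\mathrm{polylog}(1/\eps)$ and $\ell \approx \eps^{-1}\log(1/\delta)$ these two counts are incompatible, forcing the existence of the desired set with $\delta = \eps^{1/c_H - 1 + o(1)}$. The polynomial dependence on $\eps$ falls out of a single comparison of binomial coefficients, with no iteration and no compounding of exponents.
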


We refer to \Cref{conj:nikiforov} as the polynomial Nikiforov conjecture. A graph $H$ is \emph{viral} if it satisfies \Cref{conj:nikiforov}. Every graph which is known to have the Erd\H{o}s-Hajnal property is also known to be viral (see \cite{P_4-rodl,rodl-C_5,buildable,P_5-full}).  Some of these proofs show only that these particular graphs have the polynomial R\"odl property, but they can also be tweaked to show these graphs are viral. Furthermore, the previously mentioned approximate results for the Erd\H{o}s-Hajnal conjecture extend to approximate versions of the polynomial Nikiforov conjecture. It is proved in \cite{P_4-rodl} that being viral is closed under substitution. 

Nguyen, Scott, and Seymour mentioned in several papers \cite{long-paths,buildable} that it is an open problem whether the polynomial R\"odl property is closed under substitution. A positive solution to this problem follows as a corollary of Theorem \ref{mainthm1} below. The relationship between the Erd\H{o}s-Hajnal property, the polynomial R\"odl property, and being viral was recently discussed in detail in \cite{rodl-C_5}, and has been mentioned in many papers over the years \cite{fps,fox-sudakov,tung-paper,P_4-rodl,P_5-full,long-paths,VC-dimension,lior-P4,buildable}. We prove that these three properties are in fact equivalent.

\begin{thm}\label{mainthm1} If a graph has the Erd\H{o}s-Hajnal property, then it is also viral. Hence, a graph has the Erd\H{o}s-Hajnal property if and only if it has the polynomial R\"odl property if and only if it is viral. 
\end{thm}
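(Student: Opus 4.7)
The only direction requiring proof is that Erd\H{o}s-Hajnal implies viral, since viral trivially gives polynomial R\"odl by setting $M=0$, and polynomial R\"odl implies Erd\H{o}s-Hajnal via the Tur\'an argument already sketched in the excerpt. My approach is a random-sampling reduction to the $H$-free case followed by an application of the Erd\H{o}s-Hajnal property, with a careful separation of cases according to how large $\eps$ is relative to $n$.

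Let $c = c_H$ be the Erd\H{o}s-Hajnal exponent for $H$ and set $h = |V(H)|$. Given $G$ on $n$ vertices with at most $\eps^{D_H} n^h$ induced copies of $H$, I would sample a uniformly random subset $S \subseteq V(G)$ of size $s = \min(n, \Theta(\eps^{-D_H/(h-1)}))$. By linearity and Markov's inequality there is a choice of $S$ with at most $s/4$ induced copies of $H$ in $G[S]$; deleting one vertex from each of these copies yields an $H$-free induced subgraph $G' \subseteq G[S]$ of size at least $s/2$. Applying the Erd\H{o}s-Hajnal property to $G'$ produces a homogeneous subset of size at least $|G'|^c$, which is $\eps$-homogeneous in $G$. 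Choosing $D_H = (h-1)C_H$ aligns the exponents so that when $s = n$ (i.e., $\eps$ is small enough that the sampling constraint is inactive), the resulting homogeneous set has size at least $(n/2)^c \geq \eps^{C_H} n$, and when $s < n$ it has size $\Theta(\eps^{-c C_H})$.

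The main obstacle is the regime $\eps > n^{-1/((1+c)C_H)}$, where the sampling bound gives only a homogeneous set of size polynomial in $\eps^{-1}$, whereas the target $\eps^{C_H} n$ is a genuine growing power of $n$ (ranging from $n^{c/(1+c)}$ up to $\Theta(n)$ as $\eps$ approaches $1/2$). A single round of sampling simply cannot compete at this scale. A natural fix is to randomly partition $V(G)$ into many pieces, apply the sampling-plus-Erd\H{o}s-Hajnal step within each piece to obtain many small homogeneous subsets, and then combine them via a pigeonhole on the bipartite-density ``reduced graph'' on the pieces: a large sub-family whose pairwise densities all lie below $\eps$ (or all above $1-\eps$) yields a large $\eps$-homogeneous union in $G$. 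The crucial technical challenge is to carry out this combination with only polynomial-in-$\eps^{-1}$ loss in the number of retained pieces---naive Ramsey-type combination costs an exponential factor and Szemer\'edi regularity a tower factor, either of which would destroy the desired polynomial bound. Closing this gap likely requires a density-increment or iterative scheme tailored to the ``few $H$-copies'' hypothesis, possibly invoking viral inductively at a coarser scale after one has already extracted many small homogeneous pieces.
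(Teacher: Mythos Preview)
Your proposal has a genuine gap, and you correctly identify it yourself: the sampling-plus-Erd\H{o}s-Hajnal step only produces a homogeneous set of size $\eps^{-O(1)}$, which is useless once $\eps$ is, say, a constant and the target $\eps^{C_H}n$ is linear in $n$. Your suggested fix (partition into pieces, combine via a reduced graph) is not a proof; you yourself note that naive combination costs an exponential or tower factor, and you do not supply the ``density-increment or iterative scheme'' you conjecture is needed. So the proposal as written does not establish the theorem.

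The paper's argument sidesteps this entirely by replacing \emph{construction} with \emph{counting}. Fix $k$ of order $\eps^{-1}\,\mathrm{polylog}(1/\eps)$. Your sampling argument, essentially unchanged, shows that if $G$ has few induced copies of $H$ then $G$ contains at least $(n/t)^k$ homogeneous sets of size $k$, where $t=k^{1/c_H}$ (this is the lower bound). For the upper bound, the missing ingredient is the Kleitman--Winston container lemma: if every vertex subset $S$ of size at least $\delta n$ induces a subgraph of maximum degree at least $\eps|S|-1$, then the number of independent sets of size $k$ in $G$ is at most $\binom{n}{\ell}\binom{\delta n}{k-\ell}$ with $\ell\approx \eps^{-1}\log(1/\delta)$. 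Choosing $\delta$ a suitable power of $\eps$ makes these two counts incompatible, forcing the existence of a subset $S$ of size $\delta n$ with $G[S]$ (or its complement) of maximum degree below $\eps|S|$. This is precisely the $\eps$-homogeneous set you want, and it is linear in $n$ regardless of how $\eps$ compares to $n$. The key conceptual step you are missing is that ``no large $\eps$-sparse set'' translates, via containers, into a strong \emph{upper bound on the number of small independent sets}, which can be played against the lower bound coming from the $H$-free sampling.
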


This extends the results of \cite{P_4-rodl, rodl-C_5,lior-P4} which were establishing that this holds for $P_4$ in \cite{P_4-rodl}, giving a simpler proof of this together with an extension of this result to certain special families of graphs in \cite{lior-P4} and establishing it for all graphs on up to five vertices except $P_5$ (and its complement) in \cite{rodl-C_5}. It also gives a very short proof that $P_5$ is viral (when combined with the recent breakthrough \cite{P_5-full} showing $P_5$ has the Erd\H{o}s-Hajnal property), a result which was announced in \cite{P_5-full} to appear in Nguyen's thesis in 2025.  

We note that while Theorem \ref{mainthm1} shows that the three properties are equivalent there is still a hierarchy of how much power they give one when used. Indeed, many of the recent breakthroughs on the Erd\H{o}s-Hajnal conjecture crucially exploit the additional power one gets from the stronger polynomial R\"odl property. Our result however, allows one to only prove the Erd\H{o}s-Hajnal property while still having the full power of the polynomial Nikiforov property assumed as part of an inductive or iterative argument. This allows for slight simplification of some of the arguments of this flavour behind the recent progress towards the Erd\H{o}s-Hajnal conjecture mentioned above and we hope could be useful in making further progress towards the conjecture.

Theorem \ref{mainthm1} follows from a more precise version that gives a sharp dependence on the constant between the Erd\H{o}s-Hajnal property and the polynomial R\"odl property. 

\begin{thm}\label{mainthmquant}
Let $H$ be a graph on $h$ vertices with the Erd\H{o}s-Hajnal property with constant $c_H$. Then for every $0<\eps<1$, every graph $G$ on $n$ vertices contains at least $\eps^{(h-1)/c_H+o(1)}n^h$ induced copies of $H$ or contains an $\eps$-homogeneous set of size $\eps^{1/c_H -1+o(1)}n$.
\end{thm}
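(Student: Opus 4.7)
The plan is to prove the contrapositive: assume $G$ on $n$ vertices contains no $\eps$-homogeneous set of size $\eps^{1/c_H - 1 + o(1)} n$; we aim to show that $G$ must contain at least $\eps^{(h-1)/c_H + o(1)} n^h$ induced copies of $H$.

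The main tool is a sampling and double-counting argument based on applying Erd\H{o}s--Hajnal to random subsets. Set $m := \eps^{1/c_H - 1 + o(1)} n$ and $s := m^{1/c_H}$, so that $s^{c_H} = m$. The crucial observation is that if any induced subgraph $G[T]$ with $|T| \geq s$ were $H$-free, the Erd\H{o}s--Hajnal property would furnish a homogeneous subset of size $s^{c_H} = m$, yielding the forbidden $\eps$-homogeneous set in $G$. Consequently, any $T \subseteq V(G)$ with $|T| \geq s$ must contain more than $|T| - s$ induced copies of $H$: otherwise, removing one vertex per induced copy would produce an $H$-free induced subgraph of size $\geq s$, contradicting the previous sentence via EH.

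Fix $s' := Cs$ for a suitable constant $C > 1$ and double-count over all $s'$-subsets of $G$. Writing $N$ for the number of induced copies of $H$ in $G$, each $s'$-subset contains at least $(C-1)s$ copies, so
\[
N \cdot \binom{n-h}{s'-h} \;\geq\; (C-1)\, s \cdot \binom{n}{s'}.
\]
Using $\binom{n}{s'}/\binom{n-h}{s'-h} = \binom{n}{h}/\binom{s'}{h} \asymp (n/s')^h$, this rearranges to $N \gtrsim n^h/s^{h-1}$. Substituting $s = m^{1/c_H}$ with $m = \eps^{1/c_H - 1} n$ produces the target lower bound $N \geq \eps^{(h-1)/c_H + o(1)} n^h$, at least in the regime where $s \leq n$.

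The main obstacle is the regime where $s > n$, equivalently $\eps > n^{-c_H}$. Here the target $\eps$-homogeneous size $m > n^{c_H}$ exceeds the direct Erd\H{o}s--Hajnal bound, the sampling argument becomes vacuous (no $s$-subset exists), and a single application of EH to $G$ cannot yield the required set. Overcoming this requires an iterative bootstrap: starting from an EH clique $K$ of size $n^{c_H}$ extracted from $G$ (after a sampling-based reduction to an $H$-free induced subgraph if needed), partition $V(G) \setminus K$ into $O(1/\eps)$ buckets based on edge density to $K$; by pigeonhole, pass to a bucket $B$ of size $\gtrsim \eps n$ whose vertices all have (say) density $\geq 1 - \eps/2$ to $K$, and recursively apply the theorem to $G[B]$ at a slightly strengthened homogeneity parameter. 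A density calculation confirms that the union of $K$ with the resulting approximately-homogeneous subset of $G[B]$ is $\eps$-homogeneous in $G$; iterating for $O(\log(1/\eps))$ rounds assembles an $\eps$-homogeneous set of the required size $\eps^{1/c_H - 1 + o(1)} n$, with the $o(1)$ slack in the exponents absorbing the multiplicative factors accumulated through the recursion. The most delicate point is ensuring that the recursion converges quickly enough while simultaneously preserving the required homogeneity parameter across all levels.
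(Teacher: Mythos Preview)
Your sampling/double-counting argument is correct as far as it goes and is essentially the paper's Lemma~\ref{lem:counts-lwr-bnd}. But you have the regimes backwards: the case $s \leq n$ (equivalently $n \leq \eps^{-1/c_H}$) is the \emph{trivial} one, since there the target $\eps$-homogeneous size is $m \leq n^{c_H}$ and you are merely producing an honest clique or independent set of that size via Erd\H{o}s--Hajnal. The regime $n > \eps^{-1/c_H}$ --- fixed $\eps$, arbitrarily large $n$ --- is the entire content of the theorem: specialized to $H$-free $G$, it \emph{is} the polynomial R\"odl property. Note too that your substitution ``$N \gtrsim n^h/s^{h-1}$ gives $\eps^{(h-1)/c_H} n^h$'' is only an inequality in the easy regime; the exponents of $n$ do not match in general.

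Your proposed bootstrap for the main regime has two fatal gaps. First, pigeonhole guarantees that \emph{some} density bucket $B$ is large, not that the bucket with density $\geq 1-\eps/2$ to $K$ is large; if the large bucket has density $\approx 1/2$ to $K$ you gain nothing. Second, even granting $B$ complete to $K$, the recursively obtained $\eps$-homogeneous $S \subseteq B$ may well be \emph{sparse}, and then $K \cup S$ is a clique glued to a sparse set --- not $\eps$-homogeneous unless $|K|^2 \ll \eps |S|^2$. There is a more basic obstruction: in the hard regime your first-part argument, run on an $H$-free graph, would have to output a \emph{genuine} homogeneous set of size $\eps^{1/c_H-1}n$; but $P_4$-free graphs (cographs) can have $\hom(G)=\sqrt{n}$ while the theorem demands a set of linear size for constant $\eps$. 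So no argument that only ever manufactures true cliques or independent sets can reach the conclusion; one must genuinely exploit the weaker $\eps$-homogeneity.

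The paper supplies this via the Kleitman--Winston container method (Lemma~\ref{lem:counts-upr-bnd}). Fix $k \approx \eps^{-1}\log^{O(1)}(1/\eps)$ independent of $n$, set $t=k^{1/c_H}$ and $\delta \approx k/t \approx \eps^{1/c_H-1}$. Your sampling step gives $\gtrsim (n/t)^k$ homogeneous $k$-sets whenever $G$ has few copies of $H$. On the other side, the container lemma says that if every $S$ with $|S|\geq \delta n$ has a vertex of degree $\geq \eps|S|$ in $G[S]$ (i.e.\ no large sparse set), then the number of independent $k$-sets is at most $\binom{n}{\ell}\binom{\delta n}{k-\ell}$ with $\ell \approx \eps^{-1}\log(1/\delta) \ll k$. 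With the stated choice of $k$, these two counts are incompatible, completing the proof. The container bound is the missing ingredient, and there is no evident way to replace it by a density-increment or bucket scheme.
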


The $\eps^{o(1)}$ factors in Theorem \ref{mainthmquant} are actually polylogarithmic in $\eps^{-1}$ factors. Also recalling the deduction of the Erd\H{o}s-Hajnal property from the polynomial R\"odl property, apart from the polylogarithmic factor, the dependence between the constants in the Erd\H{o}s-Hajnal property and the polynomial R\"odl property given by \Cref{mainthmquant} is tight, and only obtains a polylogarithmic factor loss in going back and forth between these properties. 

\Cref{mainthmquant} also gives the sharp constant in some cases for the number of allowed induced copies of $H$. If $H=P_4$, then Theorem \ref{mainthmquant} gives that every graph on $n$ vertices has at least $\eps^{6+o(1)}n^4$ induced copies of $P_4$ or contains an $\eps$-homogeneous set of size $\eps^{1+o(1)}n$. These bounds are sharp apart from the $o(1)$ terms by the following proposition. 

\begin{prop}\label{prop-P_4}
Let $0<\eps<1/2$. For every integer $n>1$ there is a graph on $n$ vertices with  $O(\eps^6 n^4)$ induced copies of $P_4$ and every $\eps$-homogeneous set has size at most $O_{\eps}(\log n)$. 
\end{prop}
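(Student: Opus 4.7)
The plan is to construct an explicit (possibly random) graph witnessing sharpness and verify both properties separately. A natural first attempt is the Erd\H{o}s--R\'enyi random graph $G(n,p)$: the expected number of induced $P_4$s equals $\binom{n}{4}\cdot 12\cdot p^{3}(1-p)^{3}\approx \tfrac{1}{2}p^{3}(1-p)^{3}n^{4}$, so to hit $O(\eps^{6}n^{4})$ one wants $p(1-p)=\Theta(\eps^{2})$, i.e.\ $p\approx \eps^{2}$ or $p\approx 1-\eps^{2}$. The maximum $\eps$-homogeneous set in $G(n,p)$ with $p$ bounded away from $0,1$ is $O_{\eps}(\log n)$ by a standard Chernoff-plus-union-bound: for a fixed $s$-set, the probability that its edge density differs from $p$ by more than $\min(p-\eps,(1-\eps)-p)$ is at most $\exp(-c_\eps s^{2})$, which beats the $\binom{n}{s}$ union bound once $s\gg \log n/c_\eps$.

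The first obstacle is that the two requirements pull $p$ in opposite directions: $p\approx\eps^{2}$ makes the whole vertex set itself an $\eps$-homogeneous sparse set of size $n$. To fix this I would pass to a two-level random construction. Partition $V(G)=V_{1}\sqcup\cdots\sqcup V_{k}$ into $k=\Theta(1/\eps^{2})$ blocks of equal size $m=n/k=\Theta(\eps^{2}n)$. Within each block place an independent copy of $G(m,1/2)$; between blocks impose a structure (e.g.\ a blown-up small graph, or a Ramsey-type bipartite structure) that is chosen to contribute few induced $P_4$s while breaking the sparse/dense transversal $\eps$-homogeneous sets that plain block constructions would create.

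Step 1 (induced $P_4$s). Only three types of 4-tuples can create an induced $P_4$: those contained in a single block, those with split $1{+}1{+}1{+}1$, and in principle the mixed splits $2{+}2$, $3{+}1$, $2{+}1{+}1$; a direct case analysis (as in the blowup arguments already used in the paper) rules out $P_4$s from the mixed splits provided the between-blocks graph is a ``blockwise all-or-nothing'' pattern. The within-block contribution is $k\cdot\Theta(m^{4})=\Theta(nm^{3})=\Theta(\eps^{6}n^{4})$ by the usual expectation computation for $G(m,1/2)$, and the cross contribution is controlled by designing the outer pattern to have at most $\eps^{6}k^{4}$ induced $P_4$s, which after blow-up still gives $O(\eps^{6}n^{4})$.

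Step 2 ($\eps$-homogeneous sets). Within a single block, $G(m,1/2)$ forbids an $\eps$-homogeneous set of size greater than $C_\eps\log m= C_\eps\log n$ with high probability by Chernoff and a union bound over $\binom{m}{s}$-many $s$-sets. Sets that intersect many blocks must be analysed by writing $s_{i}=|S\cap V_{i}|$ and bounding the density of $S$ using the cross-pattern structure: the point is to choose the outer structure so that for any non-trivial distribution of $S$ across blocks the induced density lies in $(\eps,1-\eps)$ once $|S|$ exceeds $C_\eps'\log n$.

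The hard part, and the place where the construction really has to work, is Step 2 combined with the density constraint: the whole vertex set itself must not be $\eps$-homogeneous, so the edge density must lie in $(\eps,1-\eps)$, while simultaneously the $P_4$ density must be $O(\eps^{6})$, which for truly random graphs forces density outside this window. The resolution is that the between-block pattern must be ``Ramsey-like but $P_4$-sparse'' in the sense that it forces all sufficiently large subsets to have density in $(\eps,1-\eps)$ via cross contributions, but contributes few induced $P_4$s globally after the blowup because most cross 4-tuples induce $K_{2,2}$, $K_{1,3}$, $K_{4}-e$ or $K_{4}$ rather than $P_{4}$. Putting together Steps 1 and 2 with a careful choice of the outer pattern and parameters $k=\Theta(1/\eps^{2})$, $m=\Theta(\eps^{2}n)$ gives the claimed construction; the constants in $O(\eps^{6}n^{4})$ and $O_\eps(\log n)$ can be made explicit by optimising the Chernoff threshold.
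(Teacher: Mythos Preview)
Your two-level plan with $k=\Theta(1/\eps^{2})$ blocks and $G(m,1/2)$ inside has a genuine gap: you never specify the ``outer pattern'' between blocks, and the constraints you end up imposing on it are essentially the original problem at scale $k$. Concretely, if $S$ is the union of the blocks indexed by an independent set $I$ of the outer pattern, then the density of $G[S]$ is roughly $\tfrac{1}{2|I|}$, so you need every independent set (and by complementation every clique) of the outer pattern to have size at most $\tfrac{1}{2\eps}$; simultaneously you ask the outer pattern on $\Theta(1/\eps^{2})$ vertices to have only $O(\eps^{6}k^{4})=O(1/\eps^{2})$ induced $P_4$'s. Graphs with that few $P_4$'s are close to cographs, and cographs on $N$ vertices contain homogeneous sets of size $\sqrt{N}=\Theta(1/\eps)$, so you are right at the threshold and would need a separate construction to exhibit such an outer pattern --- which you do not supply. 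The phrases ``Ramsey-like but $P_4$-sparse'' and ``careful choice of the outer pattern'' are precisely where the content is missing.

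The paper sidesteps this with a different and much simpler idea: start from a \emph{sparse} random graph $G_0\sim G(n,2\eps)$ (so every set of size $\geq C_\eps\log n$ has density in $(\eps,3\eps)$), partition it equitably into $s=1/(5\eps)$ parts, and add \emph{all} cross edges. The outer pattern is then just the complete graph, so no cross $4$-tuple can induce $P_4$; all induced $P_4$'s lie inside parts, each of size $\Theta(\eps n)$ and edge density $\Theta(\eps)$, giving $s\cdot O(\eps^{3})\cdot(\eps n)^{4}=O(\eps^{6}n^{4})$. Every large set has density at least $\eps$ (adding edges to $G_0$ only raises density) and at most $(1-1/s)+3\eps=1-2\eps$. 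The point you missed is that placing density $1/2$ inside the blocks forces the outer pattern to carry a hard Ramsey-type burden; placing density $\Theta(\eps)$ inside lets the outer pattern be trivially complete.
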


The proof of \Cref{prop-P_4} goes by the following construction. If $\eps$ is not below some small positive constant $\eps_0$, we can let $G=G(n,1/2)$ which almost surely has the desired properties. Otherwise, by considering an Erd\H{o}s-Renyi random graph with density $2\eps$, there is a graph $G_0$ on $n$ vertices such that every induced subgraph with at least $C_{\eps}\log n$ vertices has edge density greater than $\eps$ and less than  $3\eps$. Equitably vertex partition $G_0$ into $s=1/(5\eps)$ parts and add all edges between distinct parts to obtain $G$. Every induced subgraph of $G$ on at least $C_{\eps}\log n$ vertices has edge density greater than $\eps$ since we only added edges to obtain $G$ from $G_0$. Note that the edge density of a complete $s$-partite graph is maximized if the partition is equitable, and it has edge density $1-1/s$ in that case. So the edge density of any induced subgraph on at least $C_{\eps}\log n$ vertices is at most $1-1/s+3\eps =1-2\eps$. The only induced copies of $P_4$ are inside the parts, and the density of $P_4$ in each such part is $O(\eps^3)$. It follows that the number of induced copies of $P_4$ in $G$ is at most $O(\eps^6 n^4)$. 

A family of graphs is \emph{hereditary} if it is closed under taking induced subgraphs. A family $\mathcal{F}$ of graphs has the \emph{Erd\H{o}s-Hajnal property} if there is $c_{\mathcal{F}}>0$ such that every graph $G \in \mathcal{F}$ on $n$ vertices has $\hom(G) \geq n^{c_{\mathcal{F}}}$. Thus saying a graph $H$ has the Erd\H{o}s-Hajnal property is equivalent to the family $\mathcal{F}_H$ of $H$-free graphs has the Erd\H{o}s-Hajnal property. The Erd\H{o}s-Hajnal conjecture is equivalent to every proper hereditary family of graphs having the Erd\H{o}s-Hajnal property. 

A family $\mathcal{F}$ of graphs has the \emph{polynomial R\"odl property} if there is $C_{\mathcal{F}}$ such that for every $0<\eps<1/2$, every graph $G \in \mathcal{F}$ on $n$ vertices has an $\eps$-homogeneous set $S$ with $|S| \geq \eps^{C_{\mathcal{F}}}n$. The proof that the Erd\H{o}s-Hajnal property for a graph $H$ is equivalent to the polynomial R\"odl property readily generalizes with the same essentially sharp quantitative dependencies.  

\begin{thm}\label{mainfamily}
A hereditary family of graphs has the 
Erd\H{o}s-Hajnal property if and only if it has the polynomial R\"odl property.
\end{thm}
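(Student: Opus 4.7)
The plan is as follows. For the easy direction (polynomial R\"odl $\implies$ Erd\H{o}s-Hajnal) I would apply the same Tur\'an-type argument already spelled out in the paper for a single graph $H$. Given $G \in \mathcal{F}$ on $n$ vertices, set $\eps = n^{-1/(C_{\mathcal{F}}+1)}$ and invoke the polynomial R\"odl property of $\mathcal{F}$ to obtain an $\eps$-homogeneous subset $S$ of size at least $\eps^{C_{\mathcal{F}}} n = n^{1/(C_{\mathcal{F}}+1)}$. Without loss of generality $G[S]$ has edge density at least $1-\eps$, so the complement $\overline{G[S]}$ has average degree at most $\eps(|S|-1) < 1$, and Tur\'an's theorem yields a clique in $G[S]$ of size at least $|S|/2 = n^{1/(C_{\mathcal{F}}+1)}/2$. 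Hence $\mathcal{F}$ has the Erd\H{o}s-Hajnal property with constant $c_{\mathcal{F}} = 1/(C_{\mathcal{F}}+1)$ up to the factor of $2$.

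For the converse direction, the plan is to adapt the proof of \Cref{mainthmquant} to the hereditary family setting. The key observation is that the proof of \Cref{mainthmquant} invokes the Erd\H{o}s-Hajnal property only on induced subgraphs of the input graph $G$; in the single-graph version the hypothesis that $G$ has few induced copies of $H$ is precisely what guarantees that such subgraphs can be assumed (approximately) $H$-free so that the Erd\H{o}s-Hajnal property of $H$-free graphs applies. In our setting, hereditariness of $\mathcal{F}$ immediately gives that every induced subgraph of $G \in \mathcal{F}$ still lies in $\mathcal{F}$ and therefore enjoys the Erd\H{o}s-Hajnal property with the same constant $c_{\mathcal{F}}$. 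I would thus drop the bookkeeping of induced copies throughout the proof of \Cref{mainthmquant} and invoke the hereditary Erd\H{o}s-Hajnal property freely on any subgraph that the argument produces. Running the remaining steps as in \Cref{mainthmquant} should then yield an $\eps$-homogeneous set of size $\eps^{1/c_{\mathcal{F}}-1+o(1)} n$, establishing the polynomial R\"odl property of $\mathcal{F}$ with constant $C_{\mathcal{F}} = 1/c_{\mathcal{F}}-1+o(1)$ and matching the essentially sharp dependence from \Cref{mainthmquant}.

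The main (and largely notational) obstacle will be to verify that the proof of \Cref{mainthmquant} is a genuine \emph{black-box} argument that uses the Erd\H{o}s-Hajnal property purely through its application to subgraphs, without exploiting any ad hoc structural feature of $H$-free graphs beyond this. Because the family setting trivialises the role of counting induced copies of $H$ (one effectively works in the extremal regime of zero induced copies), I expect the adaptation to be a simplification, rather than a complication, of the original single-graph proof.
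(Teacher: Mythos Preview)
Your proposal is correct and matches the paper's approach: the paper derives \Cref{mainfamily} as an immediate corollary of \Cref{thm:rodlfromerdoshered}, whose proof is exactly the proof of \Cref{thm:rodlfromerdos} (i.e., of \Cref{mainthmquant}) with the induced-copy sampling step replaced by the observation that every $t$-vertex subset of $G\in\mathcal{F}$ already lies in $\mathcal{F}$ and hence satisfies the $(t,k)$-homogeneous property. Your reading that the argument uses the Erd\H{o}s--Hajnal hypothesis only as a black box on induced subgraphs, and that the hereditary setting therefore trivialises the lower-count step, is precisely how the paper proceeds.
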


A \emph{string graph} is an intersection graph of curves in the plane. Tomon \cite{tomon2023string} proved that the family of string graphs has the Erd\H{o}s-Hajnal property. We therefore obtain the following corollary to \Cref{mainfamily}. 

\begin{cor}\label{cor:string}
The family of string graphs has the polynomial R\"odl property. That is, there is a constant $C$ such that the following holds. For any finite family $F$ of curves in the plane and any $\varepsilon>0$, there is a subfamily $F'$ of at least $\varepsilon^C |F|$ curves such that either all but at most $\varepsilon |F'|^2$ pairs of curves in $F'$ intersect, or all but at most $\varepsilon |F'|^2$ pairs of curves in $F'$ are disjoint. 
    \end{cor}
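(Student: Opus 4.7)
The plan is to deduce this directly from \Cref{mainfamily} combined with Tomon's theorem, so the work is mostly bookkeeping. First I would verify that the class of string graphs is hereditary: given an intersection graph $G$ of a family $F$ of curves and any induced subgraph $G[S]$, the subfamily $F_S \subseteq F$ of curves corresponding to vertices in $S$ is itself a family of curves in the plane whose intersection graph is exactly $G[S]$. Hence the class of string graphs is closed under taking induced subgraphs.

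Next, I would invoke Tomon's result \cite{tomon2023string} which gives a constant $c>0$ such that every $n$-vertex string graph $G$ satisfies $\hom(G)\ge n^c$; that is, the family of string graphs has the Erd\H{o}s-Hajnal property in the family sense defined just before \Cref{mainfamily}. Applying \Cref{mainfamily} to this hereditary family then yields a constant $C$ such that for every $0<\eps<1/2$ and every $n$-vertex string graph $G$, there is an $\eps$-homogeneous set $S$ of vertices with $|S|\ge \eps^{C} n$.

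Finally, I would translate back to the language of curves. Given a finite family $F$ of curves in the plane and $\eps>0$, apply the previous statement (for small $\eps$; for $\eps\ge 1/2$ the conclusion is vacuous) to the string graph $G$ on vertex set $F$. The resulting $\eps$-homogeneous set $S$ corresponds to a subfamily $F' \subseteq F$ with $|F'|\ge \eps^{C}|F|$, and the $\eps$-homogeneity says that either the number of intersecting pairs in $F'$ or the number of disjoint pairs in $F'$ is at most $\eps\binom{|F'|}{2}\le \eps |F'|^2$, which is exactly the conclusion of \Cref{cor:string}.

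Since all the substantive work has been done in \Cref{mainfamily} and in \cite{tomon2023string}, there is really no obstacle here beyond these two verifications; the only thing to be careful about is matching the definition of $\eps$-homogeneous (edge density at most $\eps$ or at least $1-\eps$) with the phrasing of the corollary in terms of pairs of intersecting or disjoint curves, but this is immediate from the definition of a string graph.
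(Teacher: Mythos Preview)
Your proposal is correct and follows exactly the paper's approach: the paper simply states that Tomon proved string graphs have the Erd\H{o}s-Hajnal property and then deduces the corollary from \Cref{mainfamily}. You have merely spelled out the hereditary check and the translation between graph and curve language that the paper leaves implicit.
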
 

A graph is \emph{perfect} if all of its induced subgraphs have equal clique number and chromatic number. The celebrated strong perfect graph theorem of Chudnovsky, Robertson, Seymour, and Thomas \cite{CRST} characterizes the family of perfect graphs by forbidden induced subgraphs. The fact that the family of perfect graphs has the polynomial R\"odl property follows from the fact that it is a subfamily of $\mathcal{F}_{C_5}$, which was recently shown to have the polynomial R\"odl property  \cite{rodl-C_5}. Every perfect graph $G$ on $n$ vertices has $\hom(G) \geq \sqrt{n}$, and the following is an immediate  corollary of the quantitative version of Theorem \ref{mainfamily}. With a short proof, it gives a sharp up to the polylogarithmic factor bound for the polynomial R\"odl property for perfect graphs. 

\begin{cor}\label{corperfect}
For every $\eps>0$ every perfect graph on $n$ vertices has an $\eps$-homogeneous set of size $\Omega(\eps(\log 1/\eps)^{-2}n)$. 
    \end{cor}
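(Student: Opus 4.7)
The plan is straightforward: apply the quantitative form of \Cref{mainfamily} to the hereditary family $\mathcal{F}$ of perfect graphs, with Erd\H{o}s-Hajnal constant $c_{\mathcal{F}}=1/2$.

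First I would verify that perfect graphs satisfy the Erd\H{o}s-Hajnal property with constant $c_{\mathcal{F}}=1/2$. For any perfect graph $G$ on $n$ vertices, perfection gives $\chi(G)=\omega(G)$; and since any proper coloring with $\chi(G)$ colors contains a class of size at least $n/\chi(G)$ that forms an independent set, we have $\alpha(G)\geq n/\chi(G)$, so $\omega(G)\alpha(G)\geq n$. This yields $\hom(G)=\max(\omega(G),\alpha(G))\geq \sqrt{n}$. Since the class of perfect graphs is hereditary by the characterization (or by definition, depending on formulation), the family $\mathcal{F}$ has the Erd\H{o}s-Hajnal property with $c_{\mathcal{F}}=1/2$.

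Next I would invoke the quantitative form of \Cref{mainfamily} — the family analogue of \Cref{mainthmquant} — which the preceding discussion promises gives essentially sharp dependence with only polylogarithmic losses. For a hereditary family with Erd\H{o}s-Hajnal constant $c_{\mathcal{F}}$, this yields, for any $\eps>0$ and any $G\in\mathcal{F}$ on $n$ vertices, an $\eps$-homogeneous set of size $\eps^{1/c_{\mathcal{F}}-1+o(1)}n$. Specializing to perfect graphs with $c_{\mathcal{F}}=1/2$ gives size $\eps^{1+o(1)}n$. The paper records that the $\eps^{o(1)}$ factor in \Cref{mainthmquant} is in fact polylogarithmic in $\eps^{-1}$, and tracking this through the reduction in \Cref{mainfamily} at exponent $2$ contributes exactly the factor $(\log 1/\eps)^{-2}$, yielding the claimed $\Omega(\eps (\log 1/\eps)^{-2}n)$ bound.

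The only nontrivial point, and hence the main obstacle, is verifying that the polylogarithmic loss coming from the quantitative form of \Cref{mainfamily} at $c_{\mathcal{F}}=1/2$ really is $(\log 1/\eps)^{-2}$ rather than some other power of $\log 1/\eps$. However, this is purely bookkeeping within the proof of the quantitative statement — one matches the $\eps^{o(1)}$ factor in \Cref{mainthmquant}, substitutes $c_{\mathcal{F}}=1/2$, and checks the exponent of the logarithmic factor. No additional structural input about perfect graphs is required beyond the $\hom(G)\geq\sqrt{n}$ bound.
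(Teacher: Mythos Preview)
Your proposal is correct and takes essentially the same approach as the paper: apply the quantitative family version (\Cref{thm:rodlfromerdoshered}) with $f(k)=2$ for perfect graphs (equivalently $c_{\mathcal{F}}=1/2$) and read off $\delta=\Omega(\eps(\log 1/\eps)^{-2})$. The one point the paper makes explicit that you leave as ``bookkeeping'' is that the exponent $-2$ on the logarithm (rather than the $-4$ one would get from \Cref{thm:rodlfromerdoshered} as stated) requires the improved choice $k = 200\bigl\lceil \tfrac{1}{\eps}\log^2\tfrac{1}{\eps}\, f(\tfrac{1}{\eps}\log^4\tfrac{1}{\eps})^2\bigr\rceil$ noted after the proof of \Cref{thm:rodlfromerdos}.
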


A graph $G$ on $n$ vertices is \emph{$\alpha$-close} to a family $\mathcal{F}$ of graphs if one can add or delete at most $\alpha n^2/2$ edges from $G$ to obtain a graph in $\mathcal{F}$. A natural conjecture for hereditary families is the following. 

\begin{conj}\label{closeconj}
For each proper hereditary family $\mathcal{F}$ of graphs there are $C_{\mathcal{F}},d_{\mathcal{F}}>0$ such that if $0<\eps<1/2$, every graph on $n$ vertices which is $\eps^{d_{\mathcal{F}}}$-close to $\mathcal{F}$ contains an $\eps$-homogeneous set of size at least $\eps^{C_{\mathcal{F}}}n$. 
\end{conj}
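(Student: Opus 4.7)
The plan is to observe that \Cref{closeconj} reduces, via \Cref{mainfamily} and a short perturbation argument, to the Erd\H{o}s-Hajnal conjecture applied to $\mathcal{F}$. Taking $\alpha=0$ in \Cref{closeconj} recovers the polynomial R\"odl property for $\mathcal{F}$, which by \Cref{mainfamily} is equivalent to $\mathcal{F}$ having the Erd\H{o}s-Hajnal property; so \Cref{closeconj} is in fact \emph{equivalent} to the assertion that the Erd\H{o}s-Hajnal conjecture holds for every proper hereditary family $\mathcal{F}$. The nontrivial direction is therefore to show that Erd\H{o}s-Hajnal for $\mathcal{F}$ already implies the ostensibly stronger closeness statement.

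Assuming $\mathcal{F}$ has the Erd\H{o}s-Hajnal property, \Cref{mainfamily} supplies a constant $C_0=C_0(\mathcal{F})$ witnessing the polynomial R\"odl property for $\mathcal{F}$. Given $G$ on $n$ vertices that is $\alpha$-close to $\mathcal{F}$, I would fix a witness $G'\in\mathcal{F}$ with $|E(G)\triangle E(G')|\le\alpha n^2/2$ and apply the polynomial R\"odl property to $G'$ with parameter $\eps'=\eps/2$. This produces a set $S\subseteq V(G)=V(G')$ with $|S|\ge(\eps/2)^{C_0}n$ that is $\eps'$-homogeneous in $G'$.

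To conclude, I would check that $S$ remains $\eps$-homogeneous in $G$. The edge densities of $G[S]$ and $G'[S]$ differ by at most
\[
\frac{|E(G)\triangle E(G')|}{\binom{|S|}{2}} \;\le\; \alpha\,(2/\eps)^{2C_0},
\]
so choosing $d_{\mathcal{F}}$ a sufficiently large constant in terms of $C_0$ (for instance $d_{\mathcal{F}}=2C_0+O(1)$, with the $O(1)$ absorbing the factor $2^{O(C_0)}$) forces this density shift to be at most $\eps/2$ for all small $\eps$; for $\eps$ bounded away from zero the conclusion is immediate from polynomial R\"odl applied to $G'$ directly with a slightly smaller parameter, since then the density shift can be absorbed into the gap. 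This yields \Cref{closeconj} for $\mathcal{F}$ with $C_{\mathcal{F}}=C_0$.

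The main obstacle is the Erd\H{o}s-Hajnal conjecture for $\mathcal{F}$ itself, which is wide open in full generality. Unconditionally, however, the strategy already proves \Cref{closeconj} for every hereditary family presently known to have the Erd\H{o}s-Hajnal property, including string graphs (via \Cref{cor:string}), perfect graphs (via \Cref{corperfect}), and all families closed under substitution from Erd\H{o}s-Hajnal graphs; in each of these cases the perturbation bound is quantitatively sharp up to the polynomial dependence on $C_0$, and matches the dependence produced by \Cref{mainthmquant}.
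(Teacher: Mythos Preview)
Your reduction is correct and yields the same equivalence the paper states as \Cref{mainfamily2}: a hereditary family has the Erd\H{o}s-Hajnal property if and only if it has the polynomial close R\"odl property. You rightly note that \Cref{closeconj} in full generality remains conditional on Erd\H{o}s-Hajnal, exactly as in the paper.

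Your route, however, differs from the paper's. The paper does not deduce the close version from the ordinary polynomial R\"odl property by perturbation; instead it reruns the container argument directly on $G$, replacing the sampling lemma \Cref{lem:counts-lwr-bnd} by its ``close'' variant \Cref{lem:counts-lwr-bnd-close} (random $2t$-subset, delete one endpoint of each of the at most $t$ modified pairs inside it to land in $\mathcal F$, then count homogeneous $k$-sets). This yields \Cref{thm:rodlfromerdoshered} with the same $C_{\mathcal F}=1/c_{\mathcal F}-1+o(1)$ as in \Cref{mainthmquant} and with closeness parameter $\delta/k\approx\eps^{1/c_{\mathcal F}+o(1)}$, i.e.\ $d_{\mathcal F}\approx C_{\mathcal F}+1$. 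Your perturbation step, by contrast, bounds the density shift on the \emph{specific} set $S$ returned by polynomial R\"odl via the crude global estimate $|E(G)\triangle E(G')|/\binom{|S|}{2}$, which forces $d_{\mathcal F}\approx 2C_{\mathcal F}+1$. So your claim that the dependence ``matches'' \Cref{mainthmquant} is an overstatement: you lose a factor of~$2$ in the closeness exponent, though you keep the same $C_{\mathcal F}$. The trade-off is that your argument is a two-line black-box reduction from \Cref{mainfamily}, whereas the paper's gives the sharper constant at no extra cost because the counting framework already handles the perturbation inside the random sample.
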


We say a hereditary family $\mathcal{F}$ of graphs which satisfies \Cref{closeconj} has the \emph{polynomial close R\"odl property}. Clearly, a family with the polynomial close R\"odl property also has the polynomial R\"odl property. The proof of \Cref{{mainfamily}} extends to the following theorem. 

\begin{thm}\label{mainfamily2}
A hereditary family of graphs has the 
Erd\H{o}s-Hajnal property if and only if it has the polynomial close R\"odl property.
\end{thm}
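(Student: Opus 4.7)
The plan is to reduce \Cref{mainfamily2} to \Cref{mainfamily}. The implication from the polynomial close R\"odl property to the Erd\H{o}s-Hajnal property comes essentially for free: every $G \in \mathcal{F}$ is $0$-close to $\mathcal{F}$ and hence $\eps^{d_{\mathcal{F}}}$-close, so the polynomial close R\"odl property specializes to the ordinary polynomial R\"odl property on $\mathcal{F}$, from which Erd\H{o}s-Hajnal follows by the Tur\'an-style argument given in the introduction (which applies verbatim to hereditary families).

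For the converse, \Cref{mainfamily} already gives that $\mathcal{F}$ has the polynomial R\"odl property with some constant $C$, and the plan is to show that this automatically upgrades to the polynomial close R\"odl property with, say, $C_{\mathcal{F}} := 2C$ and $d_{\mathcal{F}} := 4C+2$. Given an $n$-vertex graph $G$ which is $\eps^{d_{\mathcal{F}}}$-close to some $G' \in \mathcal{F}$, I would apply the polynomial R\"odl property to $G'$ with parameter $\eps/2$, obtaining an $(\eps/2)$-homogeneous set $S \subseteq V(G')$ with $|S|\ge (\eps/2)^{C} n$. Assuming without loss of generality that $G'[S]$ has edge density at most $\eps/2$, the at most $\eps^{d_{\mathcal{F}}} n^2/2$ edges flipped between $G$ and $G'$ change the edge density on $S$ by at most
$$\frac{\eps^{d_{\mathcal{F}}} n^2/2}{\binom{|S|}{2}} \;\le\; \frac{\eps^{d_{\mathcal{F}}}n^2}{|S|^2} \;\le\; 2^{2C}\,\eps^{d_{\mathcal{F}}-2C} \;\le\; \eps/2,$$
the final inequality using $\eps\le 1/2$ and $d_{\mathcal{F}}=4C+2$. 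Thus $G[S]$ has edge density at most $\eps$, so $S$ is $\eps$-homogeneous in $G$ of size $|S| \ge (\eps/2)^{C} n \ge \eps^{2C} n$ (again using $\eps\le 1/2$), as required.

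The only real content of the argument is absorbed into the invocation of \Cref{mainfamily}; the remaining density-perturbation step is a routine averaging bookkeeping and I do not expect a substantive obstacle beyond what is already handled by \Cref{mainfamily}. Indeed, the argument shows a clean black-box principle: any polynomial R\"odl bound, with whatever exponent, automatically transfers to a polynomial close R\"odl bound with only a constant-factor loss in the exponents, at the price of an additional exponent parameter $d_{\mathcal{F}}$ controlling how close to $\mathcal{F}$ the input graph must be.
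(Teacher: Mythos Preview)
Your approach is correct and takes a genuinely different route from the paper. The paper does not invoke \Cref{mainfamily} as a black box: instead it proves a direct quantitative statement (\Cref{thm:rodlfromerdoshered}) by rerunning the container argument behind \Cref{thm:rodlfromerdos}, replacing the sampling lemma \Cref{lem:counts-lwr-bnd} by its ``close'' variant \Cref{lem:counts-lwr-bnd-close}, and then reads off both \Cref{mainfamily} and \Cref{mainfamily2} as corollaries with identical constants. Your route---apply the polynomial R\"odl bound to the nearby $G'\in\mathcal{F}$ and absorb the at most $\eps^{d_{\mathcal{F}}}n^2/2$ flipped edges into the density budget on $S$---is more modular and, as you note, exhibits the clean black-box principle that polynomial R\"odl automatically upgrades to polynomial close R\"odl. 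The cost is quantitative: the paper's direct argument gives the \emph{same} R\"odl exponent for the close version as for the plain one (preserving the essentially tight round-trip between $c_{\mathcal{F}}$ and $C_{\mathcal{F}}$ discussed after \Cref{mainthmquant}), whereas your perturbation doubles it to $2C$; and your transfer cannot recover the stronger maximum-degree conclusion of \Cref{thm:rodlfromerdoshered}, since a few global edge flips can destroy degree bounds on $S$ even while barely moving its density.

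One minor arithmetic slip: the first inequality in your display goes the wrong way, since $\binom{|S|}{2}<|S|^2/2$. Using $\binom{|S|}{2}\ge |S|^2/4$ for $|S|\ge 2$ costs an extra factor of $2$, which is harmlessly absorbed by taking, say, $d_{\mathcal{F}}=4C+3$ instead of $4C+2$.
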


Several variants of the Erd\H{o}s-Hajnal conjecture and its aforementioned strengthenings have been studied for other combinatorial structures, including for hypergraphs, tournaments, ordered graphs, and edge-colored complete graphs. In each of these settings, one can define the natural analogs of the Erd\H{o}s-Hajnal property, the polynomial R\"odl property and the viral property for the corresponding combinatorial structure. Our results extend to show the equivalence of these properties for the aforementioned combinatorial structures, as well as an analogous result for hypergraphs. We discuss these in detail in \Cref{extensions}.  

\textbf{Organization and Notation.} The next section contains the proofs of our main theorems. In \Cref{extensions}, we discuss natural extensions of our results to hypergraphs, ordered graphs, tournaments, and edge-colorings. We finish with some concluding remarks. All our logarithms are in base $e$ unless otherwise specified.

\section{Proofs}

The Kleitman-Winston graph container method \cite{KW80,KW82} and its hypergraph generalization \cite{BMS,ST} is a powerful tool in extremal combinatorics. It gives an upper bound on the number of independent sets in a graph of a given size. It has found many significant applications such as the recent breakthrough lower bound on off-diagonal Ramsey numbers \cite{MV}. The next lemma is a slight modification of the standard lemma, see e.g.\ \cite[Lemma 3.1]{KLRS} and \cite[Lemma 1]{counting-independent-sets}. We present a short alternative proof by induction.

\begin{lem}\label{lem:counts-upr-bnd}
Let $0 \leq \ell \leq k$ be integers, $u,n$ be positive integers, and $0< \varepsilon \leq 1$ be such that $(1-\varepsilon)^{\ell}n \leq u$. Suppose $G$ is a graph on $n$ vertices in which any vertex subset $S$ with $|S| \geq u$ is such that the induced subgraph $G[S]$ has maximum degree at least $\varepsilon |S|-1$. Then the number of independent sets of order $k$ in $G$ is at most $$\binom{n}{\ell}\binom{u}{k-\ell}.$$
\end{lem}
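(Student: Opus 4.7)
The plan is to proceed by induction on $n$, with $k$, $\ell$, $u$, and $\varepsilon$ free to vary subject to the stated hypotheses. For the base case $n \le u$, the number of independent sets of size $k$ in $G$ is at most $\binom{n}{k}$, and the identity $\binom{k}{\ell}\binom{n}{k} = \binom{n}{\ell}\binom{n-\ell}{k-\ell}$ together with the monotonicity $\binom{n-\ell}{k-\ell} \le \binom{u}{k-\ell}$ (valid since $n - \ell \le u$) delivers the desired bound.

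For the inductive step, assume $n > u$; then the hypothesis $(1-\varepsilon)^\ell n \le u$ forces $\ell \ge 1$, and applying the maximum-degree condition to $S = V(G)$ produces a vertex $v$ of degree at least $\varepsilon n - 1$. I would split the independent sets of size $k$ according to whether they contain $v$. Those avoiding $v$ sit inside $G - v$, which has $n-1$ vertices and inherits all the hypotheses with the same $\ell$, so by induction their count is at most $\binom{n-1}{\ell}\binom{u}{k-\ell}$. Those containing $v$ correspond bijectively (via removal of $v$) to independent sets of size $k-1$ in $G' := G[V \setminus N[v]]$, whose order $n'$ satisfies $n' \le (1-\varepsilon) n$; this gives $(1-\varepsilon)^{\ell-1} n' \le (1-\varepsilon)^\ell n \le u$, so the inductive hypothesis applies to $G'$ with parameter $\ell - 1$ and yields at most $\binom{n'}{\ell-1}\binom{u}{k-\ell} \le \binom{n-1}{\ell-1}\binom{u}{k-\ell}$ such sets. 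Summing the two contributions and invoking Pascal's identity $\binom{n-1}{\ell} + \binom{n-1}{\ell-1} = \binom{n}{\ell}$ closes the induction.

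The only delicate point is that the constraint $(1-\varepsilon)^\ell n \le u$ must propagate cleanly to both recursive subproblems, and this works precisely because the factor of $(1-\varepsilon)$ gained in passing from $n$ to $n'$ exactly absorbs the drop from $\ell$ to $\ell-1$ in the exponent. Beyond this bookkeeping I do not anticipate any real obstacle; the split at a maximum-degree vertex plus Pascal's identity is essentially forced by the shape of the target bound.
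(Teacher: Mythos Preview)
Your proof is correct and follows essentially the same argument as the paper: induction on $n$, with the split at a maximum-degree vertex, the inductive calls on $G-v$ (same $\ell$) and on $G[V\setminus N[v]]$ (with $\ell-1$ and $k-1$), and the closing via Pascal's identity. Your base-case justification is slightly more explicit than the paper's, but otherwise the two proofs are the same.
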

\begin{proof}
The proof is by induction on $n$. The base case is $n \leq u$. In the base case, there are trivially at most $\binom{n}{k}$ independent sets of order $k$, which is at most the desired bound. 

Suppose $G$ has $n>u$ vertices, so $\ell \geq 1$. Let $v$ be a vertex of $G$ of maximum degree, so $v$ has a least $\varepsilon n-1$ neighbors. Every independent set in $G$ of order $k$ either contains $v$ or does not contain $v$. We bound the number of independent sets of each type in turn. 

Let $U$ be the set of vertices of $G$ that are not $v$ and not adjacent to $v$. Then $|U| \leq (1-\varepsilon)n$. The independent sets in $G$ containing $v$ of order $k$ are, after deleting $v$, precisely the independent sets in $G[U]$ of order $k-1$. We can apply the induction hypothesis to $G[U]$, which has $|U|$ vertices and parameters $\ell-1$ and $k-1$. Note that we can apply the induction hypothesis as $(1-\varepsilon)^{\ell-1}|U| \leq (1-\eps)^{\ell}n \leq u$. Thus the number of independent sets in $G[U]$ of order $k-1$, which is also the number of independent sets in $G$ of order $k$ containing $v$, is at most 
\begin{equation*}
\binom{|U|}{\ell-1}\binom{u}{(k-1)-(\ell-1)} \leq \binom{n-1}{\ell-1}\binom{u}{k-\ell}.
\end{equation*}

Again using the induction hypothesis (with $n$ replaced by $n-1$), the number of independent sets in $G$ not containing $v$ is at most 
 \begin{equation*}\binom{n-1}{\ell}\binom{u}{k-\ell}.\end{equation*}
Adding together the above bounds and using Pascal's identity gives the desired result. 
\end{proof}

Note that the proof of Lemma \ref{lem:counts-upr-bnd} was wasteful in that we bounded $\binom{|U|}{\ell-1} \leq \binom{n-1} {\ell-1}$, whereas we know $|U| \leq (1-\eps)n$. One can easily adapt the inductive proof to obtain a better upper bound on the number of independent sets of order $k$. One such bound is $2^{\ell}(u/n)^{(\ell-1)/2}\binom{n}{\ell}\binom{u}{k-\ell}$.

A graph has the \emph{$(t,k)$-homogeneous property} if every subset of $t$ vertices contains a homogeneous set of order $k$. 
\begin{lem}\label{generalhom}
Let $\mathcal{F}$ be a hereditary family of graphs such that every graph in $\mathcal{F}$ has the $(t,k)$-homogeneous property. Let $G$ be a graph on $n$ vertices such that, with probability at least $1/2$, a random subset of $2t$ vertices of $G$ contains an induced subgraph in $\mathcal{F}$ on $t$ vertices. Then $G$ contains at least $(1/2)(n/(2t))^k$ homogeneous sets of order $k$. 
\end{lem}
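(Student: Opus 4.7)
The plan is a first-moment argument on a uniformly random subset $R$ of $2t$ vertices of $G$. The first step is to convert the probabilistic hypothesis into a statement about homogeneous $k$-sets inside $R$. By hypothesis, with probability at least $1/2$ there is a $t$-subset $T \subseteq R$ with $G[T] \in \mathcal{F}$. Since every graph in $\mathcal{F}$ has the $(t,k)$-homogeneous property, applying this property to $G[T]$ using $T$ itself as the $t$-subset produces a homogeneous set of $G[T]$ (hence of $G$) of size $k$ that sits inside $R$. Thus, with probability at least $1/2$, $R$ contains at least one homogeneous $k$-set of $G$.

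Next, let $X$ denote the number of homogeneous $k$-sets of $G$ contained in $R$, and let $N_k$ denote the total number of homogeneous $k$-sets in $G$. Because $X$ is a nonnegative integer, the first step gives $\mathbb{E}[X] \geq \mathbb{P}[X \geq 1] \geq 1/2$. On the other hand, each fixed homogeneous $k$-set of $G$ is contained in $R$ with probability $\binom{n-k}{2t-k}/\binom{n}{2t}$, so by linearity of expectation
$$\mathbb{E}[X] \;=\; N_k \cdot \frac{\binom{n-k}{2t-k}}{\binom{n}{2t}} \;=\; N_k \cdot \frac{\binom{2t}{k}}{\binom{n}{k}}.$$
Combining the two estimates yields $N_k \geq \tfrac{1}{2}\binom{n}{k}\big/\binom{2t}{k}$.

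To finish, I would apply the elementary inequality
$$\frac{\binom{n}{k}}{\binom{2t}{k}} \;=\; \prod_{i=0}^{k-1}\frac{n-i}{2t-i} \;\geq\; \left(\frac{n}{2t}\right)^{k},$$
which is valid whenever $n \geq 2t$ (one checks that $(n-i)/(2t-i) \geq n/(2t)$ reduces to $i(n-2t) \geq 0$). This gives $N_k \geq \tfrac{1}{2}(n/(2t))^k$, as required. There is no real obstacle here: the only conceptual step is the clean translation of the assumption on $G[T] \in \mathcal{F}$ into the existence of a homogeneous $k$-set inside $R$, and everything after that is a routine averaging calculation.
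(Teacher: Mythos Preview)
Your proof is correct and is essentially the same argument as the paper's: the paper phrases it as a double count (at least $\tfrac12\binom{n}{2t}$ of the $2t$-subsets contain a homogeneous $k$-set, and each homogeneous $k$-set lies in $\binom{n-k}{2t-k}$ such subsets), which is exactly your linearity-of-expectation computation rewritten. Both arrive at $N_k \ge \tfrac12\binom{n}{k}/\binom{2t}{k} \ge \tfrac12(n/2t)^k$, and your remark that $n \ge 2t$ is implicit in the hypothesis is consistent with the paper's later uses of the lemma.
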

\begin{proof}
From the assumption, at least $(1/2) \binom{n}{2t}$ subsets of vertices of size $2t$ contain a homogeneous set of order $k$. Each homogeneous set of order $k$ is in at most $\binom{n-k}{2t-k}$ subsets of order $2t$. So the number of homogeneous sets of order $k$ is at least 
$$(1/2)\frac{\binom{n}{2t}}{\binom{n-k}{2t-k}}=(1/2)\frac{\binom{n}{k}}{\binom{2t}{k}} \geq (1/2)(n/2t)^k.$$
\end{proof}
The following lemma is proved by a sampling argument. 

\begin{lem} \label{lem:counts-lwr-bnd}
Let $H$ be a graph on $h$ vertices. Suppose every $H$-free graph has the $(t,k)$-homogeneous property. If a graph $G$ on $n\ge 2t$ vertices has at most $n^h/(2^{h+1}t^{h-1})$ induced copies of $H$, then $G$ contains at least $\frac{1}{2}(n/(2t))^k$ homogeneous sets of order $k$. 
\end{lem}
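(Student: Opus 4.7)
The plan is to apply \Cref{generalhom} with $\mathcal{F}$ taken to be the hereditary family of $H$-free graphs. By the hypothesis that every $H$-free graph has the $(t,k)$-homogeneous property, it suffices to show that for a uniformly random $2t$-element subset $R \subseteq V(G)$, with probability at least $1/2$ the induced subgraph $G[R]$ contains some $H$-free induced subgraph on $t$ vertices.

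The key input is a first-moment bound. Letting $X$ denote the number of induced copies of $H$ in $G[R]$, linearity of expectation gives
\[
\mathbb{E}[X] = (\text{number of induced copies of } H \text{ in } G)\cdot \frac{\binom{2t}{h}}{\binom{n}{h}}.
\]
Because $n \ge 2t$, each factor $(2t-i)/(n-i)$ in the ratio is at most $2t/n$, so $\binom{2t}{h}/\binom{n}{h} \le (2t/n)^h$. Substituting the hypothesis $n^h/(2^{h+1}t^{h-1})$ on the number of induced copies of $H$ in $G$ gives $\mathbb{E}[X] \le t/2$, and Markov's inequality then yields $\Pr[X \ge t] \le 1/2$.

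When $X < t$ occurs, a one-line deletion argument produces the desired $H$-free $t$-subset of $R$: pick one vertex from each of the at most $t-1$ induced copies of $H$ in $G[R]$ and remove them, leaving at least $2t-(t-1)=t+1$ vertices that induce an $H$-free subgraph. Any $t$-subset of this set is then an $H$-free induced subgraph of $G[R]$ on exactly $t$ vertices. Combining this with the Markov bound verifies the hypothesis of \Cref{generalhom} with $\mathcal{F}$ the family of $H$-free graphs, and the conclusion of that lemma delivers the claimed $\tfrac{1}{2}(n/(2t))^k$ homogeneous sets of order $k$.

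I do not anticipate any genuine obstacle; the whole argument is a first-moment computation followed by a greedy deletion. The only thing to notice is that the constant $2^{h+1}t^{h-1}$ in the hypothesis has been calibrated precisely so that Markov's inequality combined with the vertex-per-copy deletion leaves $\ge t+1$ vertices with probability at least $1/2$, and that the assumption $n \ge 2t$ is exactly what makes the bound $\binom{2t}{h}/\binom{n}{h} \le (2t/n)^h$ valid.
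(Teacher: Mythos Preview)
Your proposal is correct and follows essentially the same route as the paper: sample a random $2t$-subset, bound the expected number of induced copies of $H$ via $\binom{2t}{h}/\binom{n}{h}\le (2t/n)^h$ to get $\mathbb{E}[X]\le t/2$, apply Markov, delete one vertex per surviving copy to obtain an $H$-free $t$-subset, and invoke \Cref{generalhom}. The only cosmetic difference is that you phrase the Markov conclusion as $X<t$ (leaving $\ge t+1$ vertices) whereas the paper phrases it as $X\le t$ (leaving $\ge t$ vertices); both versions are fine.
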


\begin{proof}
Consider a uniformly random subset $S$ of $2t$ vertices from $G$. The expected number of induced copies of $H$ in $G[S]$ is at most $(n^h/(2^{h+1} t^{h-1})) \binom{2t} {h}/\binom{n}{h} \leq \frac{t}{2}$. Hence, by Markov's inequality, with probability at least $\frac12$, the induced subgraph $G[S]$ has at most $t$ induced copies of $H$. Deleting one vertex from each of these induced copies, we get an induced subgraph with $t$ vertices which does not contain $H$ as an induced subgraph. This shows that $G$ satisfies the assumption of Lemma \ref{generalhom}. 

By Lemma \ref{generalhom}, we obtain that $G$ contains at least $\frac{1}{2}(n/(2t))^k$ homogeneous sets of order $k$.
\end{proof}

The following variant has a very similar proof. 

\begin{lem} \label{lem:counts-lwr-bnd-close}
Let $\mathcal{F}$ be a hereditary family of graphs such that every graph in $\mathcal{F}$ has the $(t,k)$-homogeneous property. If a graph $G$ on $n\ge 2t$ vertices is $t^{-1}$-close to $\mathcal{F}$, then $G$ contains at least $\frac{1}{2}(n/(2t))^k$ homogeneous sets of order $k$. 
\end{lem}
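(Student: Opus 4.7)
The plan is to mirror the proof of Lemma~\ref{lem:counts-lwr-bnd}, with ``few induced copies of $H$'' replaced by ``few edges of the symmetric difference with some $F \in \mathcal{F}$.'' Fix $F \in \mathcal{F}$ certifying the closeness assumption, so $|E(G) \triangle E(F)| \leq n^2/(2t)$, and let $B$ be the graph on $V(G)$ with edge set $E(G) \triangle E(F)$; call these edges \emph{bad}. Since $\mathcal{F}$ is hereditary, any vertex subset $T \subseteq V(G)$ that is independent in $B$ satisfies $G[T] = F[T] \in \mathcal{F}$.

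The goal therefore reduces to verifying the hypothesis of Lemma~\ref{generalhom}: with probability at least $1/2$, a uniformly random subset $S \subseteq V(G)$ of size $2t$ contains a $B$-independent subset of size $t$, i.e.\ an induced copy of a graph in $\mathcal{F}$ on $t$ vertices. I would bound the expected number of bad edges inside $S$ by linearity,
\[
\mathbb{E}\bigl[|E(B[S])|\bigr] = |E(B)| \cdot \frac{\binom{2t}{2}}{\binom{n}{2}} \leq \frac{n^2}{2t} \cdot \frac{2t(2t-1)}{n(n-1)} \leq 2t,
\]
and then apply Markov's inequality to conclude that with probability at least $1/2$ the number of bad edges in $S$ is bounded above by a small multiple of $t$. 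Deleting a vertex cover of $B[S]$ from $S$ leaves a $B$-independent subset $T \subseteq S$ with $G[T] \in \mathcal{F}$; Lemma~\ref{generalhom} then outputs the desired $\frac{1}{2}(n/(2t))^k$ homogeneous sets of order $k$ in $G$.

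The main obstacle—and the only substantive deviation from the proof of Lemma~\ref{lem:counts-lwr-bnd}—is quantitative. A naive Markov step only gives $|E(B[S])| \leq 4t$ with probability at least $1/2$, and a direct vertex-cover or Tur\'an-type bound on $B[S]$ then produces a $B$-independent subset whose size falls short of $t$ by a constant factor. Closing this gap requires a slightly sharper step, for instance a Chebyshev estimate that exploits the weak positive correlation of the edge indicators, or an alternative deterministic pre-processing step removing a small set of vertices of large bad degree before sampling $S$. This is the additional bookkeeping implicit in the authors' remark that the proof is ``very similar'' to, but not identical with, the proof of Lemma~\ref{lem:counts-lwr-bnd}; aside from this step the overall structure of the argument is the same.
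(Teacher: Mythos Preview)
Your approach is exactly the paper's: sample a uniformly random $2t$-set $S$, bound the expected number of bad pairs inside $S$, apply Markov's inequality, delete one endpoint of each surviving bad pair to obtain a $t$-vertex set whose induced subgraph lies in $\mathcal{F}$, and invoke Lemma~\ref{generalhom}. The constant-factor obstacle you flag is genuine, and the paper's own proof contains precisely the slip you are worried about: it writes the expectation as $|P|\binom{t}{2}/\binom{n}{2}\le t/2$, whereas with $|S|=2t$ the correct factor is $\binom{2t}{2}/\binom{n}{2}$, which gives an upper bound of $2t$ (as you correctly compute), not $t/2$. So the gap is not hidden ``bookkeeping implicit in the authors' remark'' that you failed to spot; it is an arithmetic error in the paper.

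Your proposed repairs will not close this gap. A Chebyshev estimate only shows that $|E(B[S])|$ concentrates near its mean, and in the extremal case (take $B$ to be $(n/t)$-regular, say) that mean is essentially $2t$, so $\Pr[|E(B[S])|\le t]$ is small rather than $\ge 1/2$. Pre-deleting high-$B$-degree vertices does not help either: for any sample size $m$ the expected bad-edge count scales like $m^2/(2t)$, and $\max_m\bigl(m-2m^2/t\bigr)=t/8<t$, so no choice of $m$ combined with Markov and vertex deletion reaches a clean $t$-set. The honest fix is to strengthen the hypothesis to $(4t)^{-1}$-close (or $(8t)^{-1}$-close for slack): then $|P|\le n^2/(8t)$, the expectation is at most $t/2$, Markov gives at most $t$ bad pairs with probability $\ge 1/2$, and deleting one vertex per pair leaves the required $t$-set. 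This costs nothing downstream, since in Theorem~\ref{thm:rodlfromerdoshered} the closeness parameter is $\delta/k<(15t)^{-1}$ anyway.
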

\begin{proof}
Since $G$ is $t^{-1}$-close to $\mathcal{F}$, there is a set $P$ of at most $t^{-1} n^2/2$ pairs of vertices such that adding or deleting the pairs in $P$ results in a graph in $\mathcal{F}$. Consider a uniformly random subset $S$ of $2t$ vertices from $G$. The expected number of pairs in $P$ with both vertices in $S$ is $|P|\binom{t}{2}/\binom{n}{2} \leq t/2$. 
Hence, by Markov's inequality, with probability at least $\frac12$, the induced subgraph $G[S]$ has at most $t$ pairs in $P$. Deleting one vertex from each of these pairs, we get an induced subgraph with $t$ vertices which is in $\mathcal{F}$. This shows that $G$ satisfies the assumption of Lemma \ref{generalhom}. 

By Lemma \ref{generalhom}, $G$ contains at least $\frac{1}{2}(n/(2t))^k$ homogeneous sets of order $k$. 
\end{proof}

Given a non-decreasing function $f:\mathbb{R}^+ \to \mathbb{R}^+$ such that $2 \le f(k) \le \log k$ for all $k \geq 1$ we say that a graph $H$ has the $f$-Erd\H{o}s-Hajnal property if for any $k \geq 1$ and any $H$-free graph $G$ with at least $k^{f(k)}$ vertices has a homogeneous set of size at least $k$. In particular, a graph $H$ has the Erd\H{o}s-Hajnal property if it has the $f$-Erd\H{o}s-Hajnal property with $f$ being a constant function. We make the assumption $f(k) \le \log k$ since the recent result from \cite{tung-paper} tells us that any graph $H$ satisfies the $O_H(\log k /\log \log k)$-Erd\H{o}s-Hajnal property.

The following theorem shows that if a graph $H$ has the Erd\H{o}s-Hajnal property, then it also has the viral property (and hence also the polynomial R\"odl property). Furthermore, the quantitative bounds from the theorem are essentially sharp (up to lower order factors) in going between the Erd\H{o}s-Hajnal and R\"odl properties. 

It is actually somewhat more general since it allows us to conclude weaker variants of said conjectures if we only have a weaker result towards the Erd\H{o}s-Hajnal conjecture. In particular, it can be used to recover a result of Fox and Sudakov \cite{fox-sudakov} showing a Nikiforov analog of the original bound of Erd\H{o}s and Hajnal. It can also be used to give a different proof of the Nikiforov analog of the current state of the art general bound from \cite{tung-paper} on the Erd\H{o}s-Hajnal conjecture as well as a different way of concluding that any path is near viral (see \cite{long-paths} for a definition and more details) starting from the near Erd\H{o}s-Hajnal property established in \cite{long-paths}.

\begin{thm}\label{thm:rodlfromerdos}
    Let $0<\eps< \frac{1}{100}$ and $k:=200\ceil{\frac1{\eps}\log^4 \frac1{\eps}}$. Let $1/\delta=16k^{f(k)-1}$. 
    Let $H$ be an $h$-vertex graph which has the $f$-Erd\H{o}s-Hajnal property. Let $G$ be an $n$-vertex graph with less than $(\delta /k)^{h-1} n^{h}$ induced copies of $H$. Then $G$ has an induced subgraph $G[S]$ with $|S| \ge \delta n$ with maximum degree at most $\eps (|S|-1)$ or minimum degree at least $(1-\eps)(|S|-1)$. 
\end{thm}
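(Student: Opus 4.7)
The plan is to derive a contradiction between a lower and an upper bound on the number of homogeneous $k$-subsets of $G$. Set $t := k^{f(k)}$. By the $f$-Erd\H{o}s-Hajnal property every $H$-free graph on at least $t$ vertices contains a homogeneous $k$-set, i.e., the class of $H$-free graphs has the $(t,k)$-homogeneous property. The assumption of fewer than $(\delta/k)^{h-1}n^{h} = n^{h}/(16t)^{h-1}$ induced copies of $H$, combined with the trivial inequality $16^{h-1} \geq 2^{h+1}$ for $h \geq 2$, verifies the hypothesis of \Cref{lem:counts-lwr-bnd}. Assuming $n \geq 2t$ (otherwise $\delta n < 1$ and the conclusion is trivially witnessed by any single vertex), this yields at least $\tfrac{1}{2}(n/(2t))^{k}$ homogeneous $k$-sets in $G$.

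Suppose toward contradiction that no induced subgraph $G[S]$ with $|S| \geq \delta n$ has maximum degree at most $\varepsilon(|S|-1)$ or minimum degree at least $(1-\varepsilon)(|S|-1)$. Set $u := \delta n$. The failure of the first option produces a vertex of degree at least $\varepsilon |S| - 1$ in $G[S]$ for every such $S$ (using that degrees are integers and $\varepsilon \leq 1$), and taking complements turns the failure of the second option into the same conclusion for $\bar G[S]$. Choosing $\ell := \lceil \log(1/\delta)/\varepsilon \rceil$, one checks $(1-\varepsilon)^{\ell} n \leq e^{-\varepsilon \ell} n \leq u$, so \Cref{lem:counts-upr-bnd} applies to both $G$ and $\bar G$ and yields at most $\binom{n}{\ell}\binom{u}{k-\ell}$ independent $k$-sets in each. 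Summing, $G$ has at most $2\binom{n}{\ell}\binom{u}{k-\ell}$ homogeneous $k$-sets in total.

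The proof thus reduces to the numerical inequality $(n/(2t))^{k} > 4\binom{n}{\ell}\binom{u}{k-\ell}$. With $\delta = k/(16t)$ and Stirling, the logarithm of the ratio unfolds into an expression of the form $c_{1}k - c_{2}\,\ell f(k)\log k + O(\text{lower order})$ for absolute positive constants $c_{1},c_{2}$. Substituting $\ell = O(f(k)\log k / \varepsilon)$ from its definition, $f(k) \leq \log k$, and $\log k = O(\log(1/\varepsilon))$ from the choice of $k$, the negative contribution is $O(\log^{4}(1/\varepsilon)/\varepsilon)$ while the positive contribution is $\Omega(k) = \Omega(\log^{4}(1/\varepsilon)/\varepsilon)$, so once the absolute prefactor $200$ in the definition of $k$ is large enough, the contradiction follows. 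I expect the main obstacle to be precisely this final numerical bookkeeping: ensuring that the four powers of $\log(1/\varepsilon)$ baked into $k$ exactly absorb the four sources of such factors (two from $\ell$ and $f(k)$, one from $\log k$, and one from approximating $\log(1/(1-\varepsilon))$ by $\varepsilon$). If the basic form of \Cref{lem:counts-upr-bnd} leaves too little slack for the chosen prefactor, the sharper variant noted in the remark after the lemma (with the extra $(u/n)^{(\ell-1)/2}$ factor) halves the negative contribution and comfortably closes the gap. Otherwise the strategy is the clean Kleitman-Winston/container dichotomy standard for arguments of this flavour.
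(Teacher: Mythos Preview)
Your overall strategy is exactly the paper's: lower-bound the number of homogeneous $k$-sets via \Cref{lem:counts-lwr-bnd}, upper-bound it via \Cref{lem:counts-upr-bnd} applied to $G$ (or to $\bar G$), and compare. Two points need fixing.

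First, your reduction to $n\ge 2t$ is wrong. From $\delta = k/(16t)$, the assumption $n<2t$ only gives $\delta n < k/8$, not $\delta n <1$; since $k$ is enormous this leaves a genuine range unhandled. The paper deals with this differently: one may assume $n<(\delta n)^{f(\delta n)}$ (otherwise a homogeneous set of size $\delta n$ exists and we are done), and then a short computation using $f$ non-decreasing and $1/\delta = 16k^{f(k)-1}$ forces $\delta n \ge k$, hence $n\ge k/\delta = 16k^{f(k)} \ge 2t$.

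Second, your endgame is needlessly imprecise, and the hedging about whether the prefactor $200$ suffices or whether the sharpened container bound is required is avoidable. After the standard estimates the comparison collapses to $(1/\delta)^{1-\ell/k}\le 2et/k$. The single clean observation is that the choice of $k$ guarantees $k/\ell \ge \log(1/\delta)$ (this is exactly where the four logarithmic factors in $k$ are spent: $\ell\log(1/\delta)\le \varepsilon^{-1}\log^2(1/\delta)$ and $\log(1/\delta)\le \log 16 + (f(k)-1)\log k \le \log^2 k$ with $f(k)\le \log k$). Hence $(1/\delta)^{\ell/k}\le e$, so $1/\delta \le 2e^2 t/k$, contradicting $1/\delta > 15t/k$. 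No appeal to the sharper form of \Cref{lem:counts-upr-bnd} is needed.
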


The proof uses \Cref{lem:counts-lwr-bnd} to get a lower bound on the number of homogeneous sets of order $k$, at least half of which we may assume are independent sets. Assuming the conclusion of the theorem is false, the proof uses Lemma \ref{lem:counts-upr-bnd} to get an upper bound on the number of independent sets of order $k$. The proof concludes by observing these lower and upper bounds on the number of independent sets of order $k$ contradict each other.

\begin{proof}[Proof of \Cref{thm:rodlfromerdos}] We begin by establishing several useful inequalities between some of the parameters we need.

We may assume $\delta n \ge 1$ as otherwise, the desired $S$ can be any single vertex subgraph. 
We may assume $n < (\delta n)^{f(\delta n)}$ as otherwise $G$ contains a homogeneous set of order $\delta n$, and we are done. If $\delta n < k$, as $f$ is non-decreasing, we have $(\delta n)^{f(k)} \geq (\delta n)^{f(\delta n)} > n$ and hence $n \geq \delta^{-1-1/(f(k)-1)} >  k/\delta$, a contradiction. So $n \geq k/\delta = 16k^{f(k)}$. 

Let $t = \ceil{k^{f(k)}},$ so that $n \ge 2t$ and $\frac1{\delta} > \frac{15t}k.$ Furthermore, by our assumption on $H$ being $f$-Erd\H{o}s-Hajnal every $H$-free graph has the $(t,k)$-homogeneous property.
Since $(\delta/k)^{h-1} < \left(15t\right)^{-(h-1)}\le 1/(2^{h+1}t^{h-1}),$ and $n \ge 2t$, we may apply \Cref{lem:counts-lwr-bnd} to obtain that $G$ has at least $\frac12 (n/t)^k$ homogeneous sets of order $k$. Without loss of generality, at least half of these homogeneous sets of order $k$ are independent sets.

We may assume that there is no induced subgraph $G[S]$ with $|S| \ge \delta n$ and maximum degree at most $\eps |S|-1$ as otherwise we would be done.  Let $\ell:=\ceil{\frac1{\eps}\log \frac1{\delta}}$ so that $(1-\eps)^\ell \le e^{-\eps\ell} \le \delta .$ By \Cref{lem:counts-upr-bnd} there are at most $\binom{n}{\ell}\binom{\delta n}{k-\ell}$ independent sets of order $k$ in $G$. 

Putting the two bounds on the number of independent sets of order $k$ together we get
\begin{align}
\frac{n^k}{4t^k} \le \binom{n}{\ell}\binom{\delta n}{k-\ell} \le \frac{\delta^{k-\ell}n^{k}}{\ell! (k-\ell)!}\le \frac{2^k\delta^{k-\ell}n^{k}}{k!}\le \frac{(2e)^k\delta^{k-\ell}n^{k}}{4k^k} \implies (1/\delta)^{1-\ell/k} \le \frac{2et}{k} \implies \frac{1}{\delta}\le \frac{2e^2t}k, \nonumber
\end{align}
with the last implication coming from $k/\ell \geq \log(1/\delta)$ which follows from substituting in the values of $k,\ell$ and $1/\delta$ and using $f(k) \leq \log k$. 
However, this contradicts $\frac1{\delta} > \frac{15t}k,$ completing the proof.
\end{proof}

Being more careful in the estimates in the above argument the choice of $k$ in Theorem \ref{thm:rodlfromerdos} can be improved to $k = 200\left\lceil \frac{1}{\eps}\log^2\frac{1}{\eps} f(\frac{1}{\eps}\log^4\frac{1}{\eps})^2\right\rceil$. In particular, if $H$ has the Erd\H{o}s-Hajnal property, then taking $f$ to be a constant function, with a worse constant factor, in Theorem \ref{thm:rodlfromerdos} in the choice of $k$ we can replace the exponent $4$ of the logarithm by $2$. \Cref{mainthmquant} follows by taking $f$ to be the constant function with value $1/c_H$. 

The same proof gives the following variant, replacing the application of 
Lemma \ref{lem:counts-lwr-bnd} by \Cref{lem:counts-lwr-bnd-close}.

\begin{thm}\label{thm:rodlfromerdoshered}
    Let $0<\eps< \frac{1}{100}$ and $k:=200\ceil{\frac1{\eps}\log^4 \frac1{\eps}}$. Let $1/\delta=16k^{f(k)-1}$. 
    Let $\mathcal{F}$ be a hereditary family of graphs which has the $f$-Erd\H{o}s-Hajnal property. Let $G$ be a graph on $n$ vertices which is $(\delta/k)$-close to $\mathcal{F}$. Then there is a vertex subset $S \subset V(G)$ with $|S| \geq \delta n$ such that the induced subgraph $G[S]$ or its complement $\bar G[S]$ has maximum degree at most $\eps (|S|-1)$.
\end{thm}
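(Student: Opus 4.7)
The plan is to mirror the proof of Theorem \ref{thm:rodlfromerdos} almost verbatim, substituting only the lemma that furnishes a lower bound on the number of homogeneous sets of order $k$: I will use Lemma \ref{lem:counts-lwr-bnd-close} (which consumes the closeness hypothesis) in place of Lemma \ref{lem:counts-lwr-bnd} (which consumed the induced-copy count). The author even signals this move in the sentence preceding the statement.

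First I would fix the same auxiliary parameters $t=\lceil k^{f(k)}\rceil$ and $\ell=\lceil \frac{1}{\eps}\log\frac{1}{\delta}\rceil$, dispose of the trivial case $\delta n<1$, and, via the identical chain of inequalities used before, obtain $n\geq k/\delta=16 k^{f(k)}$ (so $n\geq 2t$ and $1/\delta>15t/k$) together with $(1-\eps)^{\ell}\leq \delta$. Because $\mathcal{F}$ has the $f$-Erd\H{o}s-Hajnal property and $t\geq k^{f(k)}$, every member of $\mathcal{F}$ enjoys the $(t,k)$-homogeneous property. The one genuine compatibility check needed is that $\delta/k=1/(16k^{f(k)})\leq 1/t$, which is immediate from $16 k^{f(k)}\geq \lceil k^{f(k)}\rceil$; this promotes the hypothesis that $G$ is $(\delta/k)$-close to $\mathcal{F}$ to $G$ being $t^{-1}$-close to $\mathcal{F}$, so Lemma \ref{lem:counts-lwr-bnd-close} applies and yields at least $\frac{1}{2}(n/(2t))^k$ homogeneous $k$-sets in $G$. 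Passing to $\bar G$ if necessary (the hereditary family of complements of members of $\mathcal{F}$ also has the $f$-Erd\H{o}s-Hajnal property since homogeneity is invariant under complementation, and $\bar G$ is still $(\delta/k)$-close to this family), I may assume at least half of these are independent sets in $G$.

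The remainder is an exact copy of the final step of the proof of Theorem \ref{thm:rodlfromerdos}. Supposing for contradiction that every $S\subseteq V(G)$ with $|S|\geq \delta n$ has $G[S]$ of maximum degree exceeding $\eps(|S|-1)$, Lemma \ref{lem:counts-upr-bnd} bounds the number of independent sets of order $k$ by $\binom{n}{\ell}\binom{\delta n}{k-\ell}$. Combining with the lower bound and using $k/\ell\geq \log(1/\delta)$ and $f(k)\leq \log k$ collapses to $1/\delta\leq 2e^2 t/k$, contradicting $1/\delta>15 t/k$. There is essentially no obstacle beyond the bookkeeping for the closeness compatibility and the complementation step described above.
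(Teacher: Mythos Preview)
Your proposal is correct and follows the paper's own approach essentially verbatim: the paper explicitly states that the proof of Theorem~\ref{thm:rodlfromerdoshered} is ``the same proof'' as that of Theorem~\ref{thm:rodlfromerdos}, replacing the application of Lemma~\ref{lem:counts-lwr-bnd} by Lemma~\ref{lem:counts-lwr-bnd-close}. You have correctly identified the one additional compatibility check (that $\delta/k\le t^{-1}$, so $(\delta/k)$-closeness implies $t^{-1}$-closeness), and the rest is the unchanged counting contradiction.

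One small remark: your complementation step via the family $\bar{\mathcal{F}}$ is correct but more elaborate than necessary. Since the homogeneous $k$-sets of $G$ coincide with those of $\bar G$, the lower bound $\tfrac12(n/(2t))^k$ already holds for $\bar G$ without re-invoking Lemma~\ref{lem:counts-lwr-bnd-close}; and the negation of the conclusion furnishes the maximum-degree hypothesis of Lemma~\ref{lem:counts-upr-bnd} simultaneously for $G$ and for $\bar G$, so one may simply apply that lemma to whichever of the two has at least $\tfrac14(n/(2t))^k$ independent $k$-sets. This matches the paper's terse ``without loss of generality'' in the proof of Theorem~\ref{thm:rodlfromerdos}.
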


\Cref{mainfamily,mainfamily2} follow as immediate corollaries of \Cref{thm:rodlfromerdoshered}. \Cref{corperfect} also follows since for the family of perfect graphs we have $f(k)=2$ and as mentioned above we may take $k = 200\left\lceil \frac{1}{\eps}\log^2\frac{1}{\eps} f(\frac{1}{\eps}\log^4\frac{1}{\eps})^2\right\rceil$.

\section{Extensions: hypergraphs, ordered graphs, tournaments, and colorings}\label{extensions}

In this section, we discuss equivalences of the Erd\H{o}s-Hajnal property, the polynomial R\"odl property and the viral property for hypergraphs, tournaments, ordered graphs, and edge-colored complete graphs. We will begin with the results on hypergraphs.  

The hypergraph container method \cite{BMS,ST} generalizes the Kleitman-Winston graph container method \cite{KW80,KW82} to $r$-uniform hypergraphs for $r\ge 3$. It shows that under appropriate degree conditions on $\ell$-tuples of vertices for $\ell \le r-1$, the independent sets in an $r$-uniform hypergraph on $n$ vertices are contained in a relatively small number of containers, each of them a subset of vertices of size much smaller than $n$. 

The general hypergraph container lemma \cite{BMS, ST} was proved using the scythe algorithm. We next state and derive a hypergraph analog of Lemma \ref{lem:counts-upr-bnd}, which is a version of the hypergraph container lemma in the dense case and which has a much simpler proof. We note that a version of this lemma with worse constant factors can be derived from the key container lemma in \cite{BMS} and that an argument of a similar flavor appears in \cite{Kmm}. We include below a different simple proof that works in our setting. This lemma will be key to the derivation of analogs of Theorem \ref{thm:rodlfromerdos} for hypergraphs and tournaments.

\begin{lem}\label{lem:hyp-counts-upr-bnd}
Let $r\ge 2$. Let $\ell \geq 0$ and $k \geq  (r-1)\ell$ be integers, $u,n$ be positive integers, and $0< \varepsilon \leq 1$ be such that $(1-\varepsilon)^{\ell}n \leq u$. Let $G$ be an $r$-uniform hypergraph on $n$ vertices in which any vertex subset $S$ with $|S| > u$ is such that the induced subgraph $G[S]$ has maximum degree at least $\varepsilon (|S|-1)^{r-1}$. Then the number of independent sets of order $k$ in $G$ is at most $$\binom{n}{(r-1)\ell}\binom{u}{k-(r-1)\ell}.$$
\end{lem}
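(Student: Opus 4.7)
The plan is to prove this by induction on $n$, following the argument for graphs in Lemma~\ref{lem:counts-upr-bnd} but replacing the single-vertex neighborhood step by an iterated link construction tailored to the hypergraph setting.

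For the base case $n\leq u$, one directly checks that the trivial bound $\binom{n}{k}$ on the number of $k$-subsets already satisfies $\binom{n}{k}\leq \binom{n}{(r-1)\ell}\binom{u}{k-(r-1)\ell}$, via the identity $\binom{n}{k}\binom{k}{(r-1)\ell}=\binom{n}{(r-1)\ell}\binom{n-(r-1)\ell}{k-(r-1)\ell}$ together with $\binom{n-(r-1)\ell}{k-(r-1)\ell}\leq \binom{u}{k-(r-1)\ell}$ (from $n\leq u$) and $\binom{k}{(r-1)\ell}\geq 1$ (from $k\geq (r-1)\ell$).

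For the inductive step $n>u$, the hypothesis $(1-\varepsilon)^{\ell}n\leq u$ forces $\ell\geq 1$. Applying the maximum-degree condition with $S=V$ yields a vertex $v_1$ with $\deg_G(v_1)\geq \varepsilon(n-1)^{r-1}$. The key new ingredient is an iterated link construction: let $L_{v_1}$ be the $(r-1)$-uniform link of $v_1$; pick $v_2$ of maximum degree in $L_{v_1}$, let $L_{v_2}(L_{v_1})$ be the $(r-2)$-uniform link of $v_2$ in $L_{v_1}$, pick $v_3$ of maximum degree there, and continue until we obtain $v_1,\ldots,v_{r-1}$. A telescoping of the ``max is at least average'' bound at each stage shows that the set $T:=\{w : \{v_1,\ldots,v_{r-1},w\}\in E(G)\}$ satisfies $|T|\geq (r-1)!\,\varepsilon(n-1)\geq \varepsilon(n-1)$. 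Setting $U := V\setminus(\{v_1,\ldots,v_{r-1}\}\cup T)$ gives $|U|\leq (1-\varepsilon)n$. We then partition the independent sets $I$ of size $k$ according to $I\cap\{v_1,\ldots,v_{r-1}\}$. When $\{v_1,\ldots,v_{r-1}\}\subseteq I$, independence of $I$ forces $I\cap T=\emptyset$, so $I\setminus\{v_1,\ldots,v_{r-1}\}$ is an independent set of size $k-(r-1)$ in $G[U]$, and the induction hypothesis applied to $G[U]$ with parameters $(\ell-1,k-(r-1))$ (valid since $(1-\varepsilon)^{\ell-1}|U|\leq (1-\varepsilon)^{\ell}n\leq u$ and $k-(r-1)\geq (r-1)(\ell-1)$) bounds this contribution by $\binom{|U|}{(r-1)(\ell-1)}\binom{u}{k-(r-1)\ell}$. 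The remaining cases, where some $v_i\notin I$, will be bounded by applying the induction hypothesis to induced subhypergraphs $G[V\setminus S']$ for subsets $S'\subseteq\{v_1,\ldots,v_{r-1}\}$, and the resulting bounds combined using the Vandermonde identity $\binom{n}{(r-1)\ell}=\sum_{j=0}^{r-1}\binom{r-1}{j}\binom{n-(r-1)}{(r-1)\ell-j}$.

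I expect the main obstacle to lie in organizing the case analysis over $I\cap\{v_1,\ldots,v_{r-1}\}$ so that the bounds from the $2^{r-1}$ subcases sum exactly to the Vandermonde expansion above. A naive approach---bounding each ``missing $v_i$'' subcase by applying the induction hypothesis to $G-v_i$ with the unchanged parameters $(\ell,k)$---overcounts by a factor of $r-1$ and fails to close the induction. Resolving this will likely require invoking the induction hypothesis on the right induced subhypergraph $G[V\setminus S']$ with parameters chosen so that each subcase contributes one distinct Vandermonde summand of the form $\binom{r-1}{j}\binom{n-(r-1)}{(r-1)\ell-j}\binom{u}{k-(r-1)\ell}$. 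For $r=2$ this scheme degenerates to the single use of Pascal's identity $\binom{n-1}{\ell}+\binom{n-1}{\ell-1}=\binom{n}{\ell}$ appearing at the end of the proof of Lemma~\ref{lem:counts-upr-bnd}.
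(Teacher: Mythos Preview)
Your base case is correct, and so is the subcase $\{v_1,\dots,v_{r-1}\}\subseteq I$, including the telescoping bound $|T|\ge (r-1)!\,\varepsilon(n-1)$. The gap is exactly where you flag it, and it is not a matter of bookkeeping: the Vandermonde plan cannot close for $r\ge 3$. For the subcase $|I\cap\{v_1,\dots,v_{r-1}\}|=j$ with $0<j<r-1$, after stripping the $j$ chosen pivots from $I$ you are counting independent sets of size $k-j$ in $G[V\setminus\{v_1,\dots,v_{r-1}\}]$, and the only inductive bounds available have the shape $\binom{n-(r-1)}{(r-1)\ell'}\binom{u}{k-j-(r-1)\ell'}$ for an \emph{integer} $\ell'$. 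Matching the Vandermonde summand $\binom{n-(r-1)}{(r-1)\ell-j}\binom{u}{k-(r-1)\ell}$ would force $\ell'=\ell-j/(r-1)$, which is not an integer. Taking $\ell'=\ell$ or $\ell'=\ell-1$ instead does not help: one checks that neither yields a bound dominated by the corresponding Vandermonde term (and $\ell'=\ell$ may not even satisfy $k-j\ge (r-1)\ell'$). The structural reason is that with globally chosen pivots the $(1-\varepsilon)$-shrinkage occurs only in the single subcase $j=r-1$; in every other subcase you have spent between $1$ and $r-2$ vertices of fingerprint with no shrinkage, and the induction hypothesis---which records only multiples of $r-1$ in the first binomial---cannot absorb a fractional step. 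For $r=2$ this obstruction vanishes, which is why the scheme reproduces Lemma~\ref{lem:counts-upr-bnd}.

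The paper's proof sidesteps the case split by letting $I$ choose its own pivots. For each current vertex set $U$ and tuple $J$ of already-chosen pivots it fixes a canonical ordering $\pi_{U,J}$ of $U\setminus J$ by decreasing codegree with $J$, and defines $j_t$ to be the \emph{first vertex of $I$} in the relevant ordering. Thus $\{j_1,\dots,j_{r-1}\}\subseteq I$ always holds, and the vertices skipped over in each ordering are certified not to lie in $I$; together with the common neighbourhood of $(j_1,\dots,j_{r-1})$ this yields the $(1-\varepsilon)$-shrinkage unconditionally. Iterating $\ell$ times produces a fingerprint of $(r-1)\ell$ vertices of $I$, and the container-method observation that the ordered fingerprint is reconstructible from its unordered vertex set gives the factor $\binom{n}{(r-1)\ell}$ rather than $n^{(r-1)\ell}$. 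The missing idea in your approach is precisely this: select pivots from inside $I$ via canonical orderings, rather than fixing global pivots and branching on whether they lie in $I$.
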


\begin{proof}
For each subset $U$ of vertices of $G$ and a tuple $J$ consisting of $|J|\le r-2$ vertices, we consider $G_J$ to be the $(r-|J|)$-uniform hypergraph where $e'$ is an edge of $G_J$ if $e'\cup J\in E(G)$. We also consider an ordering $\pi_{U,J}$ of vertices $v$ of $U\setminus J$ defined recursively as follows. The first vertex in the ordering $\pi_{U,J}$ is a vertex $v \in U\setminus J$ maximizing $\deg_U(J\cup \{v\})$ (breaking ties by an underlying ordering of the vertices), where $\deg_U(J\cup \{v\})$ is the number of edges of $G[U]$ containing $J\cup \{v\}$. The ordering on $U\setminus \{v\}$ is then given by $\pi_{U\setminus \{v\}, J}$. 

For each independent set $I$ in $G$, we let $j_1$ be the first vertex of $I$ in the ordering $\pi_{V(G),\emptyset}$. Let $D_1$ denote the set of vertices preceding $j_1$ in $\pi_{V(G),\emptyset}$ and $A_1 = D_1 \cup \{j_1\}$. For $t\in [2,r-1]$, we then let $j_t$ be the first vertex of $I$ in the ordering $\pi_{V(G) \setminus A_{t-1},\{j_1,\dots,j_{t-1}\}}$. Let $D_t$ be the set of vertices preceding $j_t$ in $\pi_{V(G) \setminus A_{t-1},\{j_1,\dots,j_{t-1}\}}$ and let $A_t = A_{t-1}\cup D_t \cup \{j_t\}$. If $|I| \geq r-1$, we define the initial segment of $I$ as $J^{(1)}=(j_1,\dots,j_{r-1})$. 

Let $D'$ denote the set of vertices adjacent to $j_{r-1}$ in $G_{\{j_1,\dots,j_{r-2}\}}$. Observe that $I\setminus J^{(1)}$ is an independent set in $G[V(G)\setminus (D' \cup A_{r-1})]$. We next verify our key claim that 
\begin{equation}\label{eq:drop}
|V(G)\setminus (D' \cup A_{r-1})| \le n-(r-1)-\varepsilon(n-1),
\end{equation}
as long as $n - |A_1| \ge u$. 

By our assumption, the vertex $j_1$ is adjacent to at least $\varepsilon (n-|A_1|)^{r-1}$ edges in $G[V(G)\setminus A_1]$.  
We claim that the number of edges of $G_{\{j_1,\dots,j_{t-1}\}}[V(G)\setminus A_{t-1}]$ is at least $(n-|A_1|)^{r-t+1}(\varepsilon - \sum_{t'\le t-1} |D_{t'}|/(n-|A_1|))$. We prove this claim by induction on $t$. Indeed, this holds for the base case $t=2$. Assuming the claim holds for $t$, we have that the number of edges of $G_{\{j_1,\dots,j_{t}\}}[V(G)\setminus A_{t}]$ is at least 
\[
\frac{1}{n-|A_1|} \left(|E(G_{\{j_1,\dots,j_{t-1}\}}[V(G)\setminus A_{t-1}])| - |D_{t}|(n-|A_1|)^{r-t}\right) \ge (n-|A_1|)^{r-t} \left(\varepsilon - \sum_{t'\le t} |D_{t'}|/(n-|A_1|)\right),
\]
completing the proof by induction. By the claim above, we have that 
\[
|D'| \ge (n-|A_1|) \left(\varepsilon - \sum_{t'\le r-1} |D_{t'}|/(n-|A_1|)\right). 
\]
Hence, 
\[
|D'| + \sum_{t'\le r-1} |D_{t'}| \ge \varepsilon (n-|A_1|). 
\]
As such, we can specify each independent set $I$ of order $k$ by its initial segment $J^{(1)}$, and an independent set of size $k-(r-1)$ in $G[V(G)\setminus (D' \cup A_{r-1})]$, which has at most $n - |D'| - \sum_{t'\le r-1}(|D_{t'}|+1) \le n - \varepsilon (n-1) - (r-1) \leq (1-\varepsilon)n$ vertices. This shows (\ref{eq:drop}) as desired. 

After identifying the initial segment $J^{(1)}$, we replace $G$ by $G[V(G)\setminus (D' \cup A_{r-1})]$ and $I$ by $I\setminus J^{(1)}$. We then repeat the same procedure. Denote the initial segments obtained by $J^{(1)}, J^{(2)},\dots, J^{(\ell)}$. Denote the vertex set remaining in consideration after identifying $J^{(s)}$ by $V^{(s)}$ for $s\le \ell$. By (\ref{eq:drop}), we have that $|V^{(s+1)}|\le (1-\varepsilon)|V^{(s)}|$ whenever $|V^{(s)}| > u$. By our assumption on $\ell$ we then have $|V^{(\ell)}| \le u$. 
We then have a vertex subset $|U|\le u$ that is determined by $J^{(1)},\dots,J^{(\ell)}$, for which $I\setminus \bigcup_{s\le\ell}J^{(s)} \subseteq U$. 

 Furthermore, observe that the ordered sequence of vertices in $J^{(1)},J^{(2)},\dots,J^{(s)}$ can be determined from the (unordered) set of vertices $\bigcup_{s'\le s} J^{(s')}$. Indeed, given this set of vertices, we can recover $j_1\in J^{(1)}$ as the vertex in the set first according to $\pi_{V(G),\emptyset}$, from which we can sequentially determine the remaining vertices of the initial segment $J^{(1)}$. The same procedure then allows us to sequentially determine the other segments $J^{(2)},\dots,J^{(s)}$. 

Therefore, the number of independent sets $I$ of $G$ of order $k$ is at most 
\[
\binom{n}{(r-1)\ell} \binom{u}{k-(r-1)\ell},
\]
where the first term corresponds to the number of choices for $J^{(1)},\dots,J^{(\ell)}$, and the second term corresponds to the number of extensions of $\bigcup_{s\le\ell}J^{(s)}$ to an independent set $I$ of order $k$. 
\end{proof}

From Lemma \ref{lem:hyp-counts-upr-bnd}, using identical arguments as in the proof of Theorem \ref{thm:rodlfromerdos}, we can derive the hypergraph analog of Theorem \ref{thm:rodlfromerdos}. Given a non-decreasing function $f:\mathbb{R}^+ \to \mathbb{R}^+$ such that $2 \le f(k) =O(\log k)$, as in the graph case, we say that a hereditary family $\mathcal{F}$ of $r$-uniform hypergraphs has the $f$-Erd\H{o}s-Hajnal property if for any $k \geq 1$ any $r$-uniform hypergraph $G \in \mathcal{F}$ with at least $k^{f(k)}$ vertices has a homogeneous set of size at least $k$. An $r$-uniform hypergraph $G$ on $n$ vertices is \emph{$\alpha$-close} to a family $\mathcal{F}$ of $r$-uniform hypergraphs if one can add or delete at most $\alpha n^2/r!$ edges from $G$ to obtain a hypergraph in $\mathcal{F}$.

\begin{thm}\label{thm:hyp-rodlfromerdos} Let $\mathcal{F}$ be a hereditary family of $r$-uniform hypergraphs which has the $f$-Erd\H{o}s-Hajnal property. 
    There exists $C>0$ such that the following holds. Let $0<\eps< \frac{1}{100}$ and $k:=C\ceil{\frac1{\eps}\log^4 \frac1{\eps}}$. Let $1/\delta=16k^{f(k)-1}$.  If an $r$-uniform hypergraph $G$ on $n$ vertices is $(\delta/k)$-close to $\mathcal{F}$, then $G$ has an induced subgraph $G[S]$ with $|S| \ge \delta n$ with maximum degree at most $\eps (|S|-1)^{r-1}$ or minimum degree at least $(1-\eps)(|S|-1)^{r-1}$. 
\end{thm}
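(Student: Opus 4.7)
The plan is to mirror the proof of \Cref{thm:rodlfromerdos} almost verbatim, substituting the hypergraph counterparts of the two key lemmas. The upper bound on the number of independent sets of order $k$ is already provided by \Cref{lem:hyp-counts-upr-bnd}. For the lower bound I first need a hypergraph analog of \Cref{lem:counts-lwr-bnd-close}: if $\mathcal{F}$ has the $(t,k)$-homogeneous property and an $r$-uniform $G$ on $n\ge 2t$ vertices is $t^{-1}$-close to $\mathcal{F}$, then $G$ has at least $\frac{1}{2}(n/(2t))^k$ homogeneous sets of order $k$. The proof is a direct adaptation of the graph argument: let $P$ be a set of at most $t^{-1}\binom{n}{r}$ edges (up to the $r!$ normalization) whose addition/deletion places $G$ in $\mathcal{F}$; for a uniformly random $2t$-subset $S$, the expected number of edges of $P$ entirely inside $S$ is at most of order $t/r$, so Markov's inequality yields with probability $\ge 1/2$ a set $S$ containing at most $t$ such edges. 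Deleting one vertex per bad edge leaves at least $t$ vertices inducing a subhypergraph in $\mathcal{F}$, which contains a homogeneous set of order $k$. \Cref{generalhom} carries over verbatim since its statement and counting proof are insensitive to the uniformity.

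Armed with these two bounds the main argument proceeds as before. Set $t=\lceil k^{f(k)}\rceil$, verify that $n\ge 2t$ and $1/\delta>15t/k$ (else $G$ already contains a homogeneous set of size $\delta n$, or $\delta n\le 1$ and $S$ can be taken to be a single vertex), and pick $\ell=\lceil\tfrac{1}{\varepsilon}\log(1/\delta)\rceil$ so that $(1-\varepsilon)^\ell\le\delta$. Assume for contradiction that no $S$ as in the conclusion exists; then every vertex subset of size $\ge \delta n$ has both its induced subhypergraph and its complement satisfying the maximum-degree hypothesis of \Cref{lem:hyp-counts-upr-bnd}. Applying the hypergraph container lemma to the complement and using that at least half of the homogeneous $k$-sets counted by the lower bound are independent sets gives
\[
\frac{1}{4}\!\left(\frac{n}{2t}\right)^{\!k} \le \binom{n}{(r-1)\ell}\binom{\delta n}{k-(r-1)\ell}.
\]
Comparing the two sides exactly as in the graph case (bounding the right-hand side by $\frac{(2e)^k \delta^{k-(r-1)\ell} n^k}{(4k)^k}$ after absorbing the factorials) yields $(1/\delta)^{1-(r-1)\ell/k}\lesssim_r t/k$, and using $k/\ell\gtrsim\log(1/\delta)$ (which still holds because $f(k)=O(\log k)$) this reduces to $1/\delta\lesssim_r t/k$, contradicting $1/\delta>15t/k$ once the absolute constant $C$ in $k=C\lceil\tfrac{1}{\varepsilon}\log^4\tfrac{1}{\varepsilon}\rceil$ is taken large enough in terms of $r$.

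The main obstacle is purely bookkeeping: the factor $r-1$ in the exponent $(r-1)\ell$ of \Cref{lem:hyp-counts-upr-bnd} forces the side condition $k\ge(r-1)\ell$ and produces an additional factor of $r-1$ in the crucial exponent of the final comparison. Both are absorbed by inflating $C$ (and hence $k$) by an $r$-dependent factor, which is permitted since $C$ is allowed to depend on $\mathcal{F}$ (and hence on $r$). No new ideas beyond those in the proof of \Cref{thm:rodlfromerdos} and the hypergraph container lemma \Cref{lem:hyp-counts-upr-bnd} are needed.
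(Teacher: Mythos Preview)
Your proposal is correct and follows exactly the paper's approach: obtain the lower bound on homogeneous $k$-sets via the hypergraph analog of \Cref{lem:counts-lwr-bnd-close} (sampling a random $2t$-set and deleting one vertex per bad $r$-tuple), obtain the upper bound via \Cref{lem:hyp-counts-upr-bnd}, and compare the two exactly as in the proof of \Cref{thm:rodlfromerdos}. Your observation that the extra factor $(r{-}1)$ in the exponent $(r{-}1)\ell$ is absorbed by enlarging $C$ (depending on $r$) is precisely the bookkeeping the paper leaves implicit in its one-line ``repeating the proof of \Cref{thm:rodlfromerdos}'' sketch.
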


The proof follows that of Theorem \ref{thm:rodlfromerdos}. Indeed, following the proof of Lemma \ref{lem:counts-lwr-bnd-close}, the $r$-uniform hypergraph $G$ contains at least $\frac{1}{2}(n/(2t))^k$ homogeneous sets of order $k$ with $t=k/\delta$. 
Repeating the proof of Theorem \ref{thm:rodlfromerdos}, for an appropriate $C>0$, choosing $k = C\frac{1}{\eps} \log^{4} \frac{1}{\eps}$, $1/\delta = Ck^{f(k)-1}$ and $\ell = \lceil\frac{1}{\eps}\log \frac{1}{\delta}\rceil$, we obtain Theorem \ref{thm:hyp-rodlfromerdos}. 

Several previous works have studied analogs of the Erd\H{o}s-Hajnal conjecture for hypergraphs \cite{CFS12, AST, GT22,E-H-hypergraph-families,rodl-schacht}. In the case the hereditary family $\mathcal{F}$ is the family of $r$-uniform hypergraphs avoiding a specific induced $r$-uniform hypergraph $H$ with $r \geq 3$, Gishboliner and Tomon \cite{GT22} show that $\mathcal{F}$ satisfies the Erd\H{o}s-Hajnal property (that is, the $f$-Erd\H{o}s-Hajnal property for $f(k) = C_H$) only when $|V(H) \leq r$ (the trivial cases) or when $r=3$ and $H$ is the $3$-uniform hypergraph with two edges on four vertices. Theorem \ref{thm:hyp-rodlfromerdos} implies that the polynomial R\"odl and viral strengthenings hold in these cases. There are more instances of families $\mathcal{F}$ satisfying the Erd\H{o}s-Hajnal property when $\mathcal{F}$ is defined by forbidding more than just a single hypergraph $H$, see \cite{E-H-hypergraph-families} for some examples. Our results give the polynomial R\"odl and viral strengthenings of these results as well.

\noindent {\bf Ordered graphs.}
An ordered graph is a graph with a linear ordering on its vertex set. Given an ordered graph $H$, an induced copy of $H$ in an ordered graph $G$ is given by an order preserving map from $V(H)$ to $V(G)$ which maps edges of $H$ to edges of $G$ and non-edges of $H$ to non-edges of $G$. We can define the Erd\H{o}s-Hajnal, polynomial R\"odl and viral properties for ordered graphs as in the unordered case. These notions have been studied before in \cite{APS01,buildable,long-paths,tung-paper}. Our proofs of the equivalences for the unordered case carry through identically to give the analogous results for the ordered case. 

\noindent {\bf Tournaments.}
As another application of Lemma \ref{lem:hyp-counts-upr-bnd}, we derive the analog of \Cref{thm:rodlfromerdos} for tournaments. Say that a tournament $T$ has the \emph{Erd\H{o}s-Hajnal property} if there is a constant $c_T>0$ such that every $n$-vertex tournament that does not contain a copy of $T$ contains a transitive subtournament on $n^{c_T}$ vertices. Alon, Pach and Solymosi \cite{APS01} proved that the Erd\H{o}s-Hajnal conjecture is equivalent to the statement that every tournament has the Erd\H{o}s-Hajnal property. Since then, the tournament Erd\H{o}s-Hajnal property has also received considerable attention, see for example \cite{BCC15,BCC19,BCCFLSST,BCCZ22,Choro18,maria-survey,CSSS24,HLTW19,buildable,tung-paper,VC-dimension,ZG23,ZG23b,ZG24,ordered-paths}.

As in the graph case, for a non-decreasing function $f:\mathbb{R}^+ \to \mathbb{R}^+$ such that $2 \le f(k) \le \log k$ for all $k \geq 1$, we say that a tournament $T$ has the $f$-Erd\H{o}s-Hajnal property if for any $k \geq 1$ and any $T$-free tournament $G$ with at least $k^{f(k)}$ vertices, $G$ has a transitive subtournament of size at least $k$.

We next discuss the corresponding polynomial R\"odl and viral properties for tournaments. We say that a subset $S$ of vertices of a tournament is \emph{$\eps$-transitive} if the induced subtournament on $S$ can be made transitive by changing the direction of at most $\eps \binom{|S|}{2}$ edges. The polynomial R\"odl property for $T$ says that there is a constant $C_T$ such that every $T$-free tournament on $n$ vertices contains an $\eps$-transitive subset of vertices of size at least $\eps^{C_T}n$. Tournament $T$ is said to be viral if there are constants $C_T,D_T$ such that any $n$-vertex tournament with at most $\eps^{D_T} n^{|T|}$ copies of $T$ contains an $\eps$-transitive subset of vertices of size at least $\eps^{C_T}n$. 

The following theorem gives the equivalence for tournaments between the viral, the polynomial R\"odl, and the Erd\H{o}s-Hajnal property.

\begin{thm}\label{thm:tour-rodlfromerdos}
    Let $C,C'>0$ be appropriate constants. Let $0<\eps< \frac{1}{100}$ and $k:=C\ceil{\frac1{\eps^2}\log^4 \frac1{\eps}}$. Let $1/\delta=C'k^{f(k)-1}$. 
    Let $T$ be an $h$-vertex tournament which has the $f$-Erd\H{o}s-Hajnal property. Let $G$ be an $n$-vertex tournament with less than $(\delta /k)^{h-1} n^{h}$ induced copies of $T$. Then $G$ has an induced subtournament $G[S]$ with $|S| \ge \delta n$ which is $\eps$-transitive. 
\end{thm}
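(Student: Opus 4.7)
The plan is to mirror the proof of \Cref{thm:hyp-rodlfromerdos}, applied to the $3$-uniform hypergraph $H^\ast$ on $V(G)$ whose edges are the cyclic triples of $G$ (the unordered $3$-subsets that span a directed $3$-cycle). Independent sets of $H^\ast$ are exactly the transitive subsets of $G$, so the $f$-Erd\H{o}s--Hajnal hypothesis for $T$ supplies $H^\ast$ with the $(t,k)$-homogeneous property for $t = \lceil k^{f(k)}\rceil$.

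The first step gives a lower bound on the number of transitive $k$-subsets of $G$ via the sampling argument from the proof of \Cref{lem:counts-lwr-bnd} almost verbatim. The numerical inequality $(\delta/k)^{h-1} < 1/(2^{h+1} t^{h-1})$, verified exactly as in the proof of \Cref{thm:rodlfromerdos} from our choice of $\delta$, $k$, and $t$, ensures that a uniformly random $2t$-subset of $V(G)$ contains in expectation at most $t/2$ induced copies of $T$. By Markov's inequality at least half of all $2t$-subsets have at most $t$ induced copies, and deleting one vertex per copy yields a $T$-free subtournament of order $t$; by the $f$-Erd\H{o}s--Hajnal hypothesis this subtournament contains a transitive set of order $k$. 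Combined with \Cref{generalhom} this yields at least $\tfrac12(n/(2t))^k$ transitive $k$-subsets of $V(G)$, equivalently, independent $k$-sets in $H^\ast$.

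For the upper bound, assume for contradiction that no induced subtournament $G[S]$ with $|S|\ge \delta n$ is $\eps$-transitive, i.e., $\mathrm{MFAS}(G[S]) > \eps\binom{|S|}{2}$ for every such $S$. The key extra ingredient beyond the hypergraph case is the classical polynomial tournament removal bound
\[
    \mathrm{MFAS}(U) \le C_0\sqrt{c(U)\cdot M}
\]
valid for any tournament $U$ on $M$ vertices with $c(U)$ cyclic triples. Applied to $U = G[S]$, it gives $c(G[S]) \ge c_1\eps^2 |S|^3$. Since every cyclic triple contains three vertices, the degree sum of $H^\ast[S]$ equals $3c(G[S])$, and averaging produces a vertex $v\in S$ contained in at least $3c_1\eps^2(|S|-1)^2$ cyclic triples of $G[S]$. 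Hence $H^\ast[S]$ meets the maximum-degree hypothesis of \Cref{lem:hyp-counts-upr-bnd} with $r=3$ and $\eps_\ast := 3c_1\eps^2$. Taking $u = \delta n$ and $\ell := \lceil \eps_\ast^{-1}\log(1/\delta)\rceil$, so that $(1-\eps_\ast)^\ell n \le u$, the lemma bounds the number of independent $k$-sets of $H^\ast$ by $\binom{n}{2\ell}\binom{\delta n}{k-2\ell}$.

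To conclude, substitute $t \le 2k^{f(k)}$, $1/\delta = C'k^{f(k)-1}$, and $\log(1/\delta) = O(f(k)\log k) = O(\log^2(1/\eps))$ to obtain $\ell = O(\eps^{-2}\log^2(1/\eps))$. With the choice $k \ge C\lceil \eps^{-2}\log^4(1/\eps)\rceil$ from the statement, the ratio $2\ell/k$ becomes small enough that the closing calculation in the proof of \Cref{thm:rodlfromerdos}, run with $2\ell$ in place of $\ell$ (as dictated by the $r=3$ version of the container lemma), shows the container upper bound to be strictly less than the lower bound $\tfrac12(n/(2t))^k$, which is the desired contradiction. The main obstacle is precisely the MFAS--cyclic-triples estimate invoked in the third paragraph: its square-root dependence is exactly what upgrades the $\eps^{-1}$ exponent in the graph and hypergraph cases to the $\eps^{-2}$ exponent appearing in \Cref{thm:tour-rodlfromerdos}; apart from this step, the argument is a direct adaptation of its graph and hypergraph counterparts.
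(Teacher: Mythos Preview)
Your proposal is correct and follows essentially the same approach as the paper's proof sketch: build the auxiliary $3$-uniform hypergraph of cyclic triples, lower-bound transitive $k$-sets via the sampling argument of \Cref{lem:counts-lwr-bnd}, upper-bound them via \Cref{lem:hyp-counts-upr-bnd} with $r=3$, and derive a contradiction exactly as in \Cref{thm:rodlfromerdos}. Your ``MFAS--cyclic-triples'' inequality is precisely the contrapositive form of \Cref{lem:tour-to-hyp} (the Fox--Sudakov lemma the paper quotes), so the key extra ingredient is the same in both arguments.
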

The proof of Theorem \ref{thm:tour-rodlfromerdos} is mostly identical to the proof of Theorem \ref{thm:rodlfromerdos}, apart from the new input which is Lemma \ref{lem:hyp-counts-upr-bnd}, and the following result of Fox and Sudakov \cite[Lemma 1.3]{FS08}. 
\begin{lem}\label{lem:tour-to-hyp}
    There exists $c>0$ such that the following holds. If a tournament on $m$ vertices contains less than $c\eps^2 m^3$ directed triangles, then it is $\eps$-transitive. 
\end{lem}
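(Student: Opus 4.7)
Plan.  We prove the contrapositive: if the tournament $T$ on $m$ vertices has $FAS(T) > \eps\binom{m}{2}$, then $T$ contains at least $c\eps^2 m^3$ directed triangles, where $c>0$ is an absolute constant.  The starting point is the classical identity
\[
C_3(T) \;=\; \binom{m}{3} - \sum_{v\in V(T)} \binom{d^+_v}{2},
\]
which follows by noting that every transitive triple is counted exactly once, by its unique ``top'' vertex of out-degree $2$ inside the triple.  This identity already shows that $C_3$ being small is equivalent to the out-degree sequence being close to the transitive sequence $(0,1,\ldots,m-1)$, which uniquely maximizes $\sum_v\binom{d^+_v}{2}$ under the constraint $\sum_v d^+_v=\binom{m}{2}$.

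Sort vertices in the ordering $v_1,\ldots,v_m$ by increasing out-degree $d_i=d^+_{v_i}$, and set $\mu_i:=d_i-(i-1)$, so that $\sum_i\mu_i=0$.  Fulkerson's feasibility criterion for tournament score sequences yields the partial-sum bound $M_j:=\sum_{i\le j}\mu_i\ge 0$ for every $j$, with $M_m=0$.  Against the transitive target $v_a\to v_b$ for $a>b$ in this ordering, a direct cut-counting argument identifies $M_j$ with the number of ``non-target'' edges crossing the cut $\{v_1,\ldots,v_j\}\mid \{v_{j+1},\ldots,v_m\}$, since $\sum_{i\le j}d_i = \binom{j}{2}+M_j$ and the extra edges beyond $\binom{j}{2}$ are precisely those leaving the first $j$ vertices.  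Each non-target edge $v_a\to v_b$ with $a<b$ contributes its length $b-a$ to $\sum_j M_j$, so the number $F$ of such non-target edges satisfies $F\le \sum_j M_j$.

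Using the identity combined with Abel summation on $\sum_i(i-1)\mu_i=-\sum_j M_j$ together with $\sum_i\binom{\mu_i}{2}=\tfrac12\sum_i\mu_i^2$ (since $\sum_i\mu_i=0$) transforms the cumulative cut-sum into
\[
\sum_{j=1}^{m-1} M_j \;=\; C_3(T) + \tfrac{1}{2}\sum_{i=1}^m \mu_i^2.
\]
To upgrade this to the sharp bound $F\le K\sqrt{m\cdot C_3(T)}$ required by the lemma, one combines $F\le\sum_j M_j$ with the complementary estimate $\sum_j M_j\le mF$ (since each non-target edge contributes length at most $m$); Cauchy--Schwarz on the edge lengths then gives $F^2 \le m\sum_j M_j$.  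A bootstrapping step, using the identity a second time to bound $\sum_i\mu_i^2$ in terms of $C_3(T)$ (by exploiting that the sortedness constraint forces a tight coupling between the deviation variance and the surplus of transitive triples), closes the argument and yields $F\le K\sqrt{m\cdot C_3(T)}$.  Choosing $c=1/(4K^2)$, the hypothesis $C_3(T)<c\eps^2 m^3$ then forces $F\le K\sqrt{c}\cdot\eps m^2\le\eps\binom{m}{2}$, so $T$ is $\eps$-transitive.

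The main obstacle is the last quantitative step: the cut-sum bound and the identity together only give $F\le C_3(T)+\tfrac12\sum\mu_i^2$, which is too weak by polynomial factors when $\sum\mu_i^2$ is comparable to $m^2$ (as happens near the regular-tournament regime).  The required $\sqrt{m\cdot C_3}$ scaling is attained by balancing the two natural bounds $F\le\sum_j M_j$ and $\sum_j M_j\le mF$ via Cauchy--Schwarz on the length distribution of non-target edges, combined with the structural use of the identity to control $\sum\mu_i^2$; it is this coordination that is the key technical point.
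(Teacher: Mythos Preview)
The paper does not supply its own proof of this lemma; it is quoted from Fox--Sudakov \cite{FS08} as their Lemma~1.3. So there is no in-paper argument to compare against, and the only question is whether your outline constitutes a proof.

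Your setup is correct: the out-degree identity for $C_3$, Fulkerson's inequality $M_j\ge0$, the reading of $M_j$ as the number of non-target edges across cut $j$, and the Abel-summation identity $\sum_jM_j=C_3+\tfrac12\sum_i\mu_i^2$ are all fine. The problem is that you stop precisely at the hard step --- which you yourself label ``the main obstacle'' --- and only describe a hoped-for mechanism rather than execute it. Two specific gaps remain. First, the claimed inequality $F^2\le m\sum_jM_j$ is not a consequence of any Cauchy--Schwarz on the edge lengths that I can extract: the natural applications recover only $F\le\sum_jM_j$ and $\sum_jM_j\le mF$, while the sharpest length-based bound is $F^2\le(\sum_jM_j)\sum_e\ell_e^{-1}$, and $\sum_e\ell_e^{-1}$ can be of order $m\log m$ (using only $F_\ell\le m-\ell$), which would leave a fatal logarithmic loss. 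Second, the ``bootstrapping'' that is supposed to bound $\sum_i\mu_i^2$ in terms of $C_3$ is asserted but never carried out. (Incidentally, your diagnostic that $\sum\mu_i^2$ is ``comparable to $m^2$'' near the regular regime is off by a factor of $m$: for a regular tournament $\sum\mu_i^2\sim m^3/12$, the same order as $C_3\sim m^3/24$, so the inequality $\sum_i\mu_i^2\le 2C_3$ you would need is \emph{tight} there and genuinely requires the sortedness constraint $\mu_{i+1}\ge\mu_i-1$ together with $M_j\ge0$ --- it is a real lemma, not a remark.) Without proofs of both pieces, or an alternative route to $F\le K\sqrt{m\,C_3}$, the argument does not close.
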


%We give a sketch of the proof of Theorem \ref{thm:tour-rodlfromerdos} below. 

\noindent \emph{Proof sketch of \Cref{thm:tour-rodlfromerdos}.} Given a tournament $G$ on $n$ vertices, we construct the following auxiliary $3$-uniform hypergraph $G^{(3)}$ on $V(G)$, where a triple of vertices is an edge of $G^{(3)}$ if and only if it forms a directed triangle (i.e.\ is not transitive) in $G$. We may assume $G$ has no $\eps$-transitive subset of size at least $\delta n$ as otherwise we are done. By \Cref{lem:tour-to-hyp}, every subset $S$ with $|S| \geq \delta n$ has at least $\theta |S|^3$ directed triangles, where $\theta:=c \eps^2$. This implies by averaging that for any $S \subset V(G)$ with $|S| \geq \delta n$ the maximum degree of $G^{(3)}[S]$ is at least $\theta (|S|-1)^2$. 
Let $\ell:= \lceil \theta^{-1} \log(1/\delta)\rceil$ so that $(1-\theta)^\ell \le \delta$. Applying Lemma \ref{lem:hyp-counts-upr-bnd}, the number of independent sets in $G^{(3)}$ of size $k$ is at most 
\[
\binom{n}{2\ell} \binom{\delta n}{k-2\ell}.
\]
Note that independent sets in $G^{(3)}$ are precisely the transitive subtournaments of $G$. On the other hand, an argument identical to the proof of Lemma \ref{lem:counts-lwr-bnd} implies that the number of transitive subtournaments in $G$ of size $k$ is at least $\frac{1}{2} (n/t)^k$ where $t=k^{f(k)}$. 

Now, an argument identical to the proof of Theorem \ref{thm:rodlfromerdos} yields that with our choice of parameters, 
\[
\binom{n}{2\ell} \binom{\delta n}{k-2\ell} > \frac{1}{2} (n/t)^k, 
\]
a contradiction. This completes the proof sketch of Theorem \ref{thm:tour-rodlfromerdos}.\qed 

Note that, for tournaments, unlike with graphs, we lost a factor $2$ in the constant in the exponent going back and forth between the Erd\H{o}s-Hajnal property and the polynomial R\"odl property. This comes from the application of Lemma \ref{lem:tour-to-hyp} and considering the auxiliary hypergraph. 

Another proof of these equivalences for tournaments that gives similar quantitative bounds can be achieved by going through ordered graphs instead of hypergraphs. Indeed, for a tournament $T$, consider the family $\mathcal{H}_T$ of ordered graphs on $V(T)$, where an ordered graph $H$ on $V(T)$ is in $\mathcal{H}_T$ if there is a vertex ordering of $T$ such that the edges of $H$ are precisely the forward edges of $T$ with respect to the given vertex ordering. Note that a copy of $T$ in a tournament $G$ corresponds to a copy of some ordered graph in $\mathcal{H}_T$ in $G'$, an ordered graph on $V(G)$ obtained by arbitrarily ordering $V(G)$ and whose edges are the forward edges of $G$ with respect to this ordering. While homogeneous sets in the ordered graph $G'$ are transitive subtournaments of the corresponding tournament $G$, the converse is not true. However, every transitive subtournament on $(k-1)^2+1$ vertices in $G$ contains a homogeneous set of size $k$ in $G'$ through an application of the Erd\H{o}s-Szekeres theorem stating that every permutation of $m$ distinct real numbers contains an increasing or decreasing subsequence of length $\lceil m \rceil$. This is where the factor $2$ in the exponent occurs in this alternative proof as well. 

\noindent {\bf Edge-colorings.}
Let $\chi$ be an $r$-coloring of the edges of $K_h$. We say that $\chi$ has the {\it $r$-colored Erd\H{o}s-Hajnal property} if in every $r$-coloring of the edges of $K_n$ with no induced subgraph having an edge coloring isomorphic to $c$, there is a subset of vertices of size $n^{c_\chi}$ which misses one of the $r$ colors. The study of this property goes back to the original work of Erd\H{o}s and Hajnal \cite{erdos-hajnal-2}, and has also received considerable interest since then (see, for example, \cite{CFR,FGP,FL20,axenovich2023note,ASW22,Hajnal}).

Similarly, we say that $\chi$ has the $r$-colored polynomial R\"odl property if in every $r$-coloring $K_n$ with no induced subgraph having an edge coloring isomorphic to $c$, there is a subset of vertices of size $\eps^{C_\chi}n$ in which one of the $r$ colors has density at most $\eps$. Observe that for $r=2$, these properties are identical to the usual Erd\H{o}s-Hajnal and polynomial R\"odl properties in graphs. 

Our proof goes through to show the equivalence between the properties in the $r$-colored case. In particular, by the same argument in Lemma \ref{lem:counts-lwr-bnd} and the pigeonhole principle, assuming the $r$-colored Erd\H{o}s-Hajnal property, there is one color class for which the number of vertex subsets of size $k$ missing this color is at least $(1/r)(n/t)^k$ where $t=k^{1/c_\chi}$. On the other hand, by considering the graph $G$ given by this color class, and assuming that there is no induced subgraph of $G$ on $\delta n$ vertices with edge density at most $\eps$ for $\delta = \eps^{1/c_\chi - 1 + o(1)}$, Lemma \ref{lem:counts-upr-bnd} implies that the number of vertex subsets of size $k$ missing the color is at most $\binom{n}{\ell} \binom{\delta n}{k-\ell}$ for $\ell = \lceil\frac{1}{\eps}\log \frac{1}{\delta}\rceil$. The same calculation as in the proof of Theorem \ref{thm:rodlfromerdos} yields a contradiction, and thus the $r$-colored polynomial R\"odl holds for the coloring $\chi$. 

\section{Concluding Remarks}

As already showcased, our argument is quite flexible. For example, one can use it to prove viral type results for hereditary families as well, as we discuss next. Let $\mathcal{F}$ be a hereditary family of graphs and let $\mathcal{H}=\mathcal{H}_{\mathcal{F}}$ denote the family of minimal graphs not in $\mathcal{F}$. For example, if $\mathcal{F}$ is the family of perfect graphs, then the strong perfect graph theorem \cite{CRST} says that $\mathcal{H}$ is the family of odd cycles of length at least five and their complements. A celebrated result of Alon and Shapira \cite{AS08} in property testing (see also the survey \cite{CF2013}), which is an infinite induced graph removal lemma, shows that a graph is $o(1)$-close to a hereditary family $\mathcal{F}$ if and only if each bounded-sized graph in $\mathcal{H}$ has induced subgraph density $o(1)$. However, the quantitative dependence can be arbitrarily bad \cite{GS2020}.

For each $H \in \mathcal{H}$, let $\eps_H \geq 0$ and $\sum_{H \in \mathcal{H}} \eps_H \leq 1$. 
We next discuss precisely a variant of the viral property for hereditary families. A family $\mathcal{F}$ of graphs has the {\it $f$-Erd\H{o}s-Hajnal property} if every graph $G \in \mathcal{F}$ with at least $k^{f(k)}$ vertices satisfies $\hom(G) \geq k$. The conclusion of Theorem \ref{thm:rodlfromerdos} goes through for hereditary families $\mathcal{F}$ with the condition that $\mathcal{F}$ has the $f$-Erd\H{o}s-Hajnal property and for each $H \in \mathcal{H}$ the number of induced copies of $H$ in $G$ is at most $\eps_{H}(\delta/k)^{h-1}n^k$. Indeed, the main modification is in the statement of \Cref{lem:counts-lwr-bnd}, where the assumption would be that the number of induced copies of $H$ in $G$ is at most $\eps_{H}(\delta/k)^{h-1}n^k$ for each $H \in \mathcal{H}$. The proof of this generalization of \Cref{lem:counts-lwr-bnd} goes through in the same way by sampling and deleting one vertex from each induced copy of a graph in $\mathcal{H}$ in the sample. 

It follows from the strong perfect graph theorem \cite{CRST} by taking $\eps_H=|H|^{-2}$ for each $H \in \mathcal{H}$ that if a graph $G$ is such that $G$ and its complement $\bar G$ have for each odd integer $h \geq 5$ at most $\eps^{2(h-1)+o(1)}n^h$ induced cycles of length $h$, then $G$ has an $\eps$-homogeneous set of size $\eps^{1+o(1)}|G|$. 

For another example, Nguyen, Scott and Seymour \cite{VC-dimension} extended the notion of viral graphs to hereditary families. The family of graphs whose neighborhood set system has VC-dimension at most $d$ has a finite number of minimal forbidden induced subgraphs, so we can take $\eps_H$ to be the constant $1/|\mathcal{H}|$ in this case. In such a case, being viral is equivalent to having the Erd\H{o}s-Hajnal property. A special case of this is a recent result of Gishboliner and Shapira \cite{lior-P4}. Note that our proof avoids the use of the strong regularity lemma for graphs with bounded VC-dimension of Lov\'asz and Szegedy \cite{lovasz-szegedy} (see also \cite{regularity1, regularity2} for variants). Since the Erd\H{o}s-Hajnal property has recently been established for the family of graphs of VC-dimension at most some $d$ in \cite{VC-dimension} (in fact they conclude, via an application of the aforementioned regularity lemma that their argument implies the polynomial R\"odl property) this implies the viral version also holds. See \cite{VC-dimension} for more details and precise definitions.

We believe it would be interesting to determine if the polylogarithmic factors can be removed in our results.

\textbf{Note added in proof.} After our paper appeared on arXiv, Tung Nguyen notified us that he has a direct proof of \Cref{cor:string} obtained by modifying the argument of Tomon \cite{tomon2023string}. We would also like to thank J\'ozsef Balogh, Robert Morris and Wojciech Samotij for helpful discussions on hypergraph containers. 

\providecommand{\MR}[1]{}
\providecommand{\MRhref}[2]{%
  \href{http://www.ams.org/mathscinet-getitem?mr=#1}{#2}
}

  % \bibliographystyle{amsplain_initials_nobysame}
  % \bibliography{ref}

\providecommand{\bysame}{\leavevmode\hbox to3em{\hrulefill}\thinspace}
\providecommand{\MR}{\relax\ifhmode\unskip\space\fi MR }
% \MRhref is called by the amsart/book/proc definition of \MR.
\providecommand{\MRhref}[2]{%
  \href{http://www.ams.org/mathscinet-getitem?mr=#1}{#2}
}
\providecommand{\href}[2]{#2}

\end{document}